\newtheorem{theorem}{Theorem}[section]
\newtheorem{lemma}[theorem]{Lemma}
\newtheorem{proposition}[theorem]{Proposition}
\newtheorem{corollary}[theorem]{Corollary}
\theoremstyle{definition}
\newtheorem{definition}[theorem]{Definition}
\theoremstyle{remark}
\newtheorem{remark}[theorem]{Remark}
\numberwithin{equation}{section}
\begin{document}

\title[Strichartz type estimates for fractional heat
equations] {Strichartz type estimates for fractional heat equations}
\author{Zhichun Zhai}
\address{Department of Mathematics and Statistics, Memorial University of Newfoundland, St. John's, NL A1C 5S7, Canada}
\curraddr{}
 \email{a64zz@mun.ca}
\thanks{Project supported in part  by Natural Science and
Engineering Research Council of Canada.}
\subjclass[2000]{Primary 35K05; 35K15; 35B65; 35Q30}
\keywords{Strichartz estimates; Time dependent potentials;
Fractional heat equations;
 Navier-Stokes equations}

\date{}

\begin{abstract}
  We obtain    Strichartz  estimates for the fractional  heat equations
  by using both the abstract Strichartz estimates of         Keel-Tao  and the
  Hardy-Littlewood-Sobolev inequality. We also prove an endpoint homogeneous
  Strichartz  estimate via replacing $ L^{\infty}_{x}(\mathbb{R}^{n})$
  by $BMO_{x}(\mathbb{R}^{n})$ and a parabolic homogeneous Strichartz
  estimate.  Meanwhile, we generalize the Strichartz estimates by replacing
  the  Lebesgue spaces with either Besov spaces or Sobolev spaces. Moreover,
  we establish the Strichartz estimates for the fractional  heat equations with
  a time dependent  potential of  an appropriate integrability.  As an application,
  we prove the global existence and uniqueness of  regular solutions in spatial
  variables for the generalized Navier-Stokes system with $L^{r}(\mathbb{R}^{n})$ data.
 \end{abstract}
\maketitle

 \vspace{0.1in} {\section{Introduction}}

 This paper studies  Strichartz type estimates for  the inhomogeneous initial
  problem associated with the fractional heat equations
\begin{equation}\label{eq1d}
\left\{\begin{array} {r@{\quad,\quad}l}
\partial_{t}v(t,x)+(-\triangle)^{\alpha}v(t,x)=F(t,x) & (t,x)\in\mathbb{R}^{1+n}_{+}=(0,\infty)\times\mathbb{R}^{n},\\
v(0,x)=f(x) & x\in \mathbb{R}^{n},
\end{array}
\right.
\end{equation}
 where   $\alpha\in(0, \infty)$ and $n\in\mathbb{N}.$  The main goal  is to determine
 pairs $(q,p)$ and $(q_{1}, p_{1})$ ensuring
\begin{equation}\label{eq1a}
\|e^{-t(-\triangle)^{\alpha}}f\|_{L^{q}_{t}(I; L^{p}_{x})}\lesssim \|f\|_{L^{2}},
\end{equation}
\begin{equation}\label{eq1b}
 \left\|\int_{0}^{t}e^{-(t-s)(-\triangle)^{\alpha}} F(s,x)ds\right\|_{L^{q}_{t}(I;L^{p}_{x})}
  \lesssim\|F\|_{L^{q_{1}'}_{t}(I;L^{p_{1}'}_{x})},
\end{equation}
where $I$ is either $[0, \infty)$ or $[0, T]$ for some $0<T<\infty,$
and $p_{1}'=\frac{p_{1}}{p_{1}-1}$ is the  conjugate  of a given
number $p_{1}\geq 1.$ Here    $\partial_{t}$ and $
\bigtriangleup=\sum_{j=1}^{n}\partial_{x_{j}}^{2} $ are the partial
derivative with respect to $t$ and the Laplacian with respect to
$x=(x_{1},\cdots,x_{n}),$ respectively. Furthermore,
$$
(-\triangle)^{\alpha}v(t,x)
 =\mathcal {F}^{-1}(|\xi|^{2\alpha}\mathcal{F}(v(t,\xi)))(x),
 $$
 where  $\mathcal {F}$  is  the Fourier  transform and  $\mathcal {F}^{-1}$ denotes its
  inverse. By the Fourier transform and Duhamel's principle, the solution of (\ref{eq1d})
    can be written as
$$
 v(t,x)=e^{-t(-\triangle)^{\alpha}}f(x)+\int_{0}^{t}e^{-(t-s)(-\triangle)^{\alpha}}F(s,x)ds,
$$
where
 $$
 e^{-t(-\triangle)^{\alpha}}f(x)=\mathcal
 {F}^{-1}(e^{-t|\xi|^{2\alpha}}\mathcal {F}f(\xi))(x)\nonumber
 :=K_{t}^{\alpha}(x)\ast f(x)
$$
 and $\ast$ stands for the convolution operating  on the space variable.

  The Strichartz  type estimates for equation (\ref{eq1d}) have just been  studied by few
  experts. Pierfelice \cite{V. Pierfelice} concerned such estimates   for  equation (\ref{eq1d})
  with $\alpha=1$ and  small potentials of very low regularity.  Miao, Yuan  and Zhang
  \cite{C. Miao} studied the non-endpoint  case of  (\ref{eq1a}) for equation (\ref{eq1d}).

  For the Schr$\ddot{o}$dinger and wave equations, the Strichartz estimates  have been well
  studied in recent years, see,   for example,  Blair-Smith-Sogge \cite{Blair Smith Sogge},
   Burq-G$\acute{e}$rard-Tzvetkov \cite{Burq gerard Tzvetkov}, Cazenave \cite{T. Cazenave},
   Kapitanski \cite{L. Kapitanski},   Keel-Tao \cite{M.Keel T.Tao}, Lindblad-Sogge
  \cite{H. Lindblad C. D. Sogge}, Mockerhaupt-Seeger-Sogge  \cite{G. Mockenhaupt A. Seeger C. D. Sogge },
  Staffilani-Tataru \cite{Staffilani Tataru}, Stefanov \cite{A. Stefanov}, Yajima-Zhang \cite{K. Yajima G. Zhang}.
  These estimates are very   important  in the study of  local and global existence for nonlinear equations,
  well posedness in Sobolev spaces with low order, scattering theory and many others, see, for example,
  Kenig-Merle \cite{Kenig Merle}, Kenig-Ponce-Vega \cite{Kenig Ponce Vega}, D'Ancona-Pierfelice-Visciglia
  \cite{P. D Ancona V. Pierfelice N. Visciglia}. The Strichartz  estimates for the Schr$\ddot{o}$dinger
   and wave equations can be  directly derived from the abstract Strichartz estimates of Keel-Tao \cite{M.Keel T.Tao}
   since  the solution groups of these two equations act as unitary operators on $L^{2}(\mathbb{R}^{n})$ and such
   operators obey both the energy estimate and the  untruncated decay estimate. While, since
   $\{e^{-t(-\triangle)^{\alpha}}\}_{t\geq 0}$ is a semigroup and acts as a self-adjoint operator on $L^{2}(\mathbb{R}^{n})-$see
   Lemma \ref{2a}, we can only apply the abstract Strichartz estimates of Keel-Tao directly to obtain (\ref{eq1a}) if
   we have the energy estimate and untruncated decay estimate.  But for (\ref{eq1b}),  we  can  make use of the
   $L^{p}-$decay estimates and the Hardy-Littlewood-Sobolev inequality.

  In this paper, we  also  establish an endpoint case of  (\ref{eq1a}) by  replacing $L_{x}^{\infty}(\mathbb{R}^{n})$
  with  the  spaces of functions of bounded mean  oscillation $(BMO_{x}({\mathbb{R}^{n}}))$.  Meanwhile, we obtain a
  parabolic homogeneous  Strichartz estimate for equation (\ref{eq1d}), the two dimensional case  of which is very useful
  for dealing with the global regularity of wave maps when combined with Lemma \ref{2b} for $\alpha=1$ and the comparison
  principle  for the heat equation, see Tao \cite{T. Tao 1}. Moreover, we generalize (\ref{eq1a}) and (\ref{eq1b}) via replacing
  $L^{p}(\mathbb{R}^{n})$ with either Besov spaces or   Sobolev spaces. These function  spaces will be made precise later.

 If equation (\ref{eq1d}) has a time dependent potential $V(t,x),$ then it becomes
  \begin{equation}\label{eq1111d}
\left\{\begin{array} {r@{\quad,\quad}l}
\partial_{t}v(t,x)+(-\triangle)^{\alpha}v(t,x)+V(t,x)v(t,x)=F(t,x) & (t,x)\in\mathbb{R}^{1+n}_{+},\\
v(0,x)=f(x) & x\in \mathbb{R}^{n}.
\end{array}
\right.
\end{equation}
 We can  obtain the Strichartz estimates for equation (\ref{eq1111d}) by using  the Banach contraction mapping principle
 and assuming an appropriate integrability condition in space and time on $V(t,x).$ A similar idea was  used  by
 D'Ancona-Pierfelice-Visciglia in \cite{P. D Ancona V. Pierfelice N. Visciglia} to get analogous estimates for
 the Schr$\ddot{o}$dinger  equations.

 As an application, we establish the global existence and uniqueness of regular solutions in spatial variables for
 the generalized Navier-Stokes system on the half-space $\mathbb{R}^{1+n}_{+},$ $n\geq 2:$
\begin{equation}\label{eq1dd}
 \left\{\begin{array} {r@{\quad\quad}l}
 \partial_{t}v+(-\triangle)^{\alpha}v+(v\cdot \nabla)v-\nabla p=h,
 & \hbox{in}\ \mathbb{R}^{1+n}_{+};\\
 \nabla \cdot v=0, & \hbox{in}\ \mathbb{R}^{1+n}_{+};\\
 v(0,x)=g(x), &\hbox{in}\ \mathbb{R}^{n} \end{array} \right.
\end{equation}
 with $\alpha\in(\frac{1}{2},\frac{1}{2}+\frac{n}{4}).$ For system (\ref{eq1dd}), Lions  \cite{J L Lions} proved the
 global  existence of  the classical solutions when $\alpha\geq \frac{5}{4}$ in dimensional $3.$
 Similar result holds  for general  dimension $n$ if $\alpha\geq \frac{1}{2}+\frac{n}{4},$ see Wu \cite{J Wu 1} and
 \cite{J Wu 2}. The mild  solutions for system (\ref{eq1dd}) are
 $$
 v(t,x)=e^{-t(-\triangle)^{\alpha}}g(x)+\int_{0}^{t}e^{-(t-s)(-\triangle)^{\alpha}}P(h-\nabla(v\otimes v))ds,
 $$
 where
 $P$ is the Helmboltz-Weyl projection:
 $$
 P=\{P_{j,k}\}_{j,k=1,\cdots,n}=\{\delta_{j,k}+R_{j}R_{k}\}_{j,k=1,\cdots,n}
 $$
 with $\delta_{j,k}$ being the Kronecker symbol and $R_{j}=\partial_{j}(-\triangle)^{-1/2}$ being the Riesz transform.
 When $\alpha=1,$ system  (\ref{eq1dd}) becomes the classical  Navier-Stokes system which is  a celebrated nonlinear
 partial differential system.

 In the above and below,  $U\lesssim V$ denotes $U\leq C V$ for  some positive constant $C$ which is independent
 of the sets or functions under consideration in both $U$ and $V;$ for  a Banach space $X$,  $L^{p}(X)$
 (where $p\in[1,\infty)$) is used as  the space of functions $f:X\longrightarrow \mathbb{R}$ with
$$
 \|f\|_{L^{p}(X)}=\left(\int_{X}|f(x)|^{p}dx\right)^{1/p}<\infty;
$$
 for a function space $F(\mathbb{R}^{n})$ on $\mathbb{R}^{n},$ $L^{q}(I;F(\mathbb{R}^{n}))$ (where $q \in[1,\infty)$)
 represents the set of  functions $f:I\times \mathbb{R}^{n}\longrightarrow \mathbb{R}$ for $I\subseteq \mathbb{R}$ with
$$
\|f\|_{L^{q}(I;
F(\mathbb{R}^{n}))}=\left(\int_{I}\|f(t,x)\|_{F(\mathbb{R}^{n})}^{q}dt\right)^{1/q}<\infty.
$$

 To state our main results,  let us recall the definitions of some function spaces.

 We use $\mathscr{S}_{0}$ to denote the following subset of the Schwartz class of rapidly decreasing functions $\mathscr{S},$
 $$
 \mathscr{S}_{0}=\left\{\phi\in \mathscr{S}: \int_{\mathbb{R}^{n}}\psi(x)x^{\gamma}dx,|\gamma|=0,1,2,\cdots\right\},
 $$
 where $x^{\gamma}=x_{1}^{\gamma^{1}}x_{2}^{\gamma^{2}} \cdots x_{n}^{\gamma^{n}},|\gamma|=\gamma_{1}+\gamma_{2}+\cdots+\gamma_{n}.$
 Its dual$\mathscr{S}'_{0}=\mathscr{S}'/\mathscr{S}^{\bot}_{0}=\mathscr{S}'/\mathcal{P},$  where $\mathcal {P}$ is the space of multinomials.

 We introduce a dyadic partition of $\mathbb{R}^{n}.$ For each $j\in\mathbb{Z},$ we let
 $$
  D_{j}=\{\xi: 2^{j-1}<|\xi|\leq 2^{j+1}\}.
 $$
  We choose $\phi_{0}\in\mathscr{S}(\mathbb{R}^{n})$ such that
 $$
  \hbox{supp}(\phi_{0})=\{\xi:2^{-1}\leq |\xi|\leq 2\}\ \hbox{and}\
  \phi_{0}>0 \ \hbox{on}\ D_{0}.
 $$
 Let
$$
 \phi_{j}(\xi)=\phi_{0}(2^{-j}\xi)\ \hbox{and}\ \widehat{\Psi_{j}}(\xi)=\frac{\phi_{j}(\xi)}{\sum_{j}\phi_{j}(\xi)}.
$$
Then $\Psi_{j}\in\mathscr{S}$ and
$$
\widehat{\Psi_{j}}(\xi)=\widehat{\Psi_{0}}(2^{-j}\xi), \ \hbox{supp}(\widehat{\Psi_{j}})\subset D_{j},
 \Psi_{j}(x)=2^{jn}\Psi_{0}(2^{j}x).
 $$
 Moreover,
 \begin{equation}\label{eq2e}
\sum_{k=-\infty}^{\infty}\widehat{\Psi_{k}}(\xi)=\left\{\begin{array}
{l@{\quad \quad}l}
 1,\ \hbox{if}\ \xi\in \mathbb{R}^{n}\backslash \{0\}, \\
 0,\ \hbox{if}\ \xi=0.
 \end{array}
  \right.
\end{equation}

 Let $\Phi\in C_{0}^{\infty}(\mathbb{R}^{n})$ be even and satisfy
 $$
 \widehat{\Phi}(\xi)=1-\sum_{k=0}^{\infty}\widehat{\Psi}_{k}(\xi).
 $$
 Then, for any $\psi\in\mathscr{S},$
$$
 \Phi\ast\psi+\sum_{0}^{\infty}\Psi_{k}\ast\psi=\psi
$$
 and for any $f\in \mathscr{S},$
$$
 \Phi\ast f+\sum_{k=0}^{\infty}\Psi_{k}\ast f=f.
$$

 To define the homogeneous Besov spaces, we let
$$
\triangle_{j}f=\Psi_{j}\ast f, j=0,\pm1,\pm2,\cdots.
$$
 For $s\in \mathbb{R}^{n}$ and $1\leq p,q\leq \infty,$ we define the homogeneous Besov space
 $\dot{B}^{s}_{p,q}$ as the set of all $f\in \mathscr{S}'_{0}$ with
$$
\|f\|_{\dot{B}^{s}_{p,q}}=\left(\sum_{j=-\infty}^{\infty}(2^{js}\|\triangle_{j}f\|_{L^{p}})^{q}\right)^{1/q}<\infty, \ \hbox{for}\ q<\infty,
$$
$$
\|f\|_{\dot{B}^{s}_{p,q}}=\sup_{-\infty<j<\infty}2^{js}\|\triangle_{j}f\|_{L^{p}}<\infty, \ \hbox{for}\ q=\infty.
$$
 To define the inhomogeneous Besov spaces, we define
 \begin{equation}\label{eq2ee}
 \triangle_{j}f=\left\{\begin{array} {l@{\quad \quad}l}
 0,\ \ \ \ \ \ \ \ \ \ \hbox{if}\ j \leq -2, \\
 \Phi\ast f,\ \ \ \hbox{if}\ j=-1,\\
 \Psi_{j}\ast f, \ \ \hbox{if}\ j=0,1,2,\cdots
 \end{array}
  \right.
\end{equation}
 For $s\in \mathbb{R}^{n}$ and $1\leq p,q\leq \infty,$ we define the inhomogeneous Besov space $B^{s}_{p,q}$ as the set of all $f\in
\mathscr{S}'$ with
$$
\|f\|_{B^{s}_{p,q}}=\|\triangle_{-1}f\|_{L^{p}}+\left(\sum_{j=0}^{\infty}(2^{js}\|\triangle_{j}f\|_{L^{p}})^{q}\right)^{1/q}<\infty, \ \hbox{for}\ q<\infty,
$$
$$
\|f\|_{B^{s}_{p,q}}=\|\triangle_{-1}f\|_{L^{p}}+\sup_{0\leq j<\infty}2^{js}\|\triangle_{j}f\|_{L^{p}}<\infty, \ \hbox{for}\ q=\infty.
$$

 On the  other hand, Besov spaces  can be defined by interpolation between the Lebesgue spaces and the   Sobolev spaces of integer
 order (see  Triebel \cite{H. Triebel}). Moreover, it follows from Bergh and L$\ddot{o}$fstr$\ddot{o}$m \cite{Berg Lofstrom}  that
 for $s\in \mathbb{R}$ and $1\leq p, q\leq \infty,$
$$
 B^{s}_{p,q}(\mathbb{R}^{n})=[H^{s_{1},p},H^{s_{2},p}]_{\theta, q}\ \hbox{and}\
 \dot{B}^{s}_{p,q}(\mathbb{R}^{n})=[\dot{H}^{s_{1},p},\dot{H}^{s_{2},p}]_{\theta, q},
$$
 where $s_{1}\neq s_{2},$ $0<\theta<1$ and $s=(1-\theta)s_{1}+\theta s_{2}.$  Here $H^{s,p}(\mathbb{R}^{n})$  and
 $\dot{H}^{s,p}(\mathbb{R}^{n})$ are the inhomogeneous and homogeneous Sobolev spaces which are the completion  of
 all infinitely differential functions $f$ with compact support in $\mathbb{R}^{n}$ with respect to the norms
$$
\|f\|_{{H}^{s,p}(\mathbb{R}^{n})}=\|(I-\triangle)^{s/2}f\|_{L^{p}(\mathbb{R}^{n})},
\hbox{and}\
\|f\|_{\dot{H}^{s,p}(\mathbb{R}^{n})}=\|(-\triangle)^{s/2}f\|_{L^{p}(\mathbb{R}^{n})}\
$$
 respectively, where $(I-\triangle)^{s/2}f=\mathcal {F}^{-1}((1+|\xi|^{2})^{s/2}\mathcal {F}f(\xi)).$

  $BMO(\mathbb{R}^{n})$ is the set of  locally integrable functions $f$ with semi-norm
$$
\|f\|_{BMO}=\left(\sup\limits_{Q}\mathcal
{L}(Q)^{-n}\int_{Q}|f(x)-f_{Q}|^{2}dx\right)^{1/2}<\infty,
$$
 where $Q$ is a cube in $\mathbb{R}^{n}$ with sides parallel to the coordinate axes, $\mathcal {L}(Q)$ is the
 sidelength of $Q$ and $f_{Q}=\mathcal {L}(Q)^{-n}\int_{Q}f(x)dx.$

\begin{definition}
  The triplet $(q,p,r)$ is called a $\sigma-$admissible triplet provided
$$
\frac{1}{q}=\sigma\left(\frac{1}{r}-\frac{1}{p}\right),
$$
where $1<r\leq p\leq\infty$ and $\sigma>0.$
\end{definition}

\begin{proposition} \label{a2}
Let $(q,p,2)$ be $\frac{n}{2\alpha}-$admissible. If $q\geq 2$ and $(q,p,\frac{n}{2\alpha})$ is not $(2,\infty, 1),$  then
(\ref{eq1a}) holds.
\end{proposition}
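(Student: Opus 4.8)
The plan is to deduce the estimate directly from the abstract Strichartz estimate of Keel--Tao \cite{M.Keel T.Tao}, applied to the operator family $U(t)=e^{-t(-\triangle)^{\alpha}}$ with Hilbert space $H=L^{2}(\mathbb{R}^{n})$ and decay exponent $\sigma=\frac{n}{2\alpha}$. Their theorem requires two inputs: an energy estimate $\|U(t)f\|_{L^{2}_{x}}\lesssim\|f\|_{L^{2}}$ uniform in $t$, and an untruncated decay estimate of the form $\|U(t)U(s)^{*}g\|_{L^{\infty}_{x}}\lesssim|t-s|^{-\sigma}\|g\|_{L^{1}}$. The $\frac{n}{2\alpha}$-admissibility of $(q,p,2)$, the condition $q\ge 2$, and the exclusion of the exponent triple $(2,\infty,1)$ are precisely the hypotheses under which Keel--Tao produce $\|U(t)f\|_{L^{q}_{t}L^{p}_{x}}\lesssim\|f\|_{L^{2}}$, so the entire task reduces to verifying these two inputs for the fractional heat semigroup.

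First I would check the energy estimate. Since $\widehat{U(t)f}(\xi)=e^{-t|\xi|^{2\alpha}}\widehat{f}(\xi)$ and $0\le e^{-t|\xi|^{2\alpha}}\le 1$ for every $t\ge 0$, Plancherel's theorem gives $\|U(t)f\|_{L^{2}}\le\|f\|_{L^{2}}$; equivalently, $\{e^{-t(-\triangle)^{\alpha}}\}_{t\ge 0}$ acts as a self-adjoint contraction on $L^{2}(\mathbb{R}^{n})$, which is the content of Lemma \ref{2a}. Next I would establish the decay estimate from the convolution kernel $K_{t}^{\alpha}$. The change of variables $\xi\mapsto t^{-1/(2\alpha)}\xi$ in the defining Fourier integral yields the scaling relation $K_{t}^{\alpha}(x)=t^{-n/(2\alpha)}K_{1}^{\alpha}(t^{-1/(2\alpha)}x)$, and since $e^{-|\xi|^{2\alpha}}\in L^{1}(\mathbb{R}^{n})$ one has $\|K_{1}^{\alpha}\|_{L^{\infty}}<\infty$; hence $\|K_{t}^{\alpha}\|_{L^{\infty}}\lesssim t^{-n/(2\alpha)}$. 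Young's inequality then gives $\|U(t)f\|_{L^{\infty}}=\|K_{t}^{\alpha}\ast f\|_{L^{\infty}}\lesssim t^{-n/(2\alpha)}\|f\|_{L^{1}}$, which is the decay bound with the correct power $\sigma=\frac{n}{2\alpha}$.

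The key point---and the place where the parabolic setting departs from the unitary-group setting of dispersive equations---is recasting the semigroup decay into the Keel--Tao hypothesis, which is phrased in terms of $|t-s|$ rather than $t+s$. Because the semigroup is self-adjoint, $U(s)^{*}=U(s)$, so the semigroup law gives $U(t)U(s)^{*}=U(t+s)$; applying the decay estimate at time $t+s$ produces $\|U(t)U(s)^{*}g\|_{L^{\infty}}\lesssim(t+s)^{-n/(2\alpha)}\|g\|_{L^{1}}$. For $s,t\ge 0$ one has $t+s\ge|t-s|$, whence $(t+s)^{-\sigma}\le|t-s|^{-\sigma}$, and this is exactly the bound demanded by the abstract theorem. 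With both hypotheses in hand, Keel--Tao applies verbatim and yields (\ref{eq1a}).

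I expect the only delicate issue to be this final conversion. Unlike a unitary group, the family $\{U(t)\}_{t\ge 0}$ does \emph{not} satisfy $U(t)U(s)^{*}=U(t-s)$, so one cannot invoke untruncated dispersion directly; instead one must combine self-adjointness with the semigroup property, and it is precisely the elementary inequality $t+s\ge|t-s|$ on the half-line $t,s\ge 0$ that rescues the argument and lets the semigroup fit into the Keel--Tao framework. Everything else is a routine verification of the two background estimates.
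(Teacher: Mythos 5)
Your proposal is correct and follows essentially the same route as the paper: both verify the energy estimate and the $L^{1}\to L^{\infty}$ decay bound (the paper quotes Lemma \ref{2b} for these, you derive them from Plancherel and the kernel scaling, which amounts to the same thing), both use self-adjointness (Lemma \ref{2a}) together with the semigroup law to write $U(t)U(s)^{*}=U(t+s)$ and the elementary inequality $t+s\geq|t-s|$ to recast the decay in the form Keel--Tao requires, and both then invoke the abstract Strichartz theorem (Lemma \ref{2d}). The "delicate" conversion you highlight is exactly the chain of inequalities the paper displays in its untruncated decay estimate, so there is no substantive difference.
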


\begin{remark} \label{a3}   Proposition \ref{a2} extends  Miao-Yuan-Zhang's \cite[Lemma 3.2]{C. Miao}  to the cases:
 $(q, p, r)=(2,\frac{2n}{n-2\alpha},2)$ when $n>2\alpha;$ $(q,p,r)=(\frac{4\alpha}{n},\infty,2)$ when $n<2\alpha.$
\end{remark}

 It is well known that for the Schr$\ddot{o}$dinger equations,  there are  pairs  $(q,p)$ and $(q_{1},p_{1})$ such that  $(q,p,2)$
 and $(q_{1},p_{1},2)$   are not $n/2-$admissible but   the inhomogeneous  Strichartz estimates hold (see Cazenave-Weissler
 \cite{T. Cazenave F. B. Weissler},  Kato \cite{T. Kato} and Vilela \cite{M. C. Vilela}). Similarly, we will prove that (\ref{eq1b})
  holds  for   some pairs $(q,p)$ and $(q_{1},p_{1})$ satisfying the property
\begin{equation}\label{eq1j}
\left(\frac{1}{q_{1}'}-\frac{1}{q}\right)+\frac{n}{2\alpha}\left(\frac{1}{p_{1}'}-\frac{1}{p}\right)=1.
\end{equation}
 This property is weaker than  the $\frac{n}{2\alpha}-$admissibility  of $(q,p,2)$ and $(q_{1}, p_{1}, 2).$

\begin{theorem}\label{a5}
Let $1\leq p_{1}'<p\leq \infty $  and $1< q_{1}'< q<\infty .$ If $(q,p)$ and $(q_{1},p_{1})$ satisfy  (\ref{eq1j}), then (\ref{eq1b}) holds.
\end{theorem}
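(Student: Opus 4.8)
The plan is to deduce the inhomogeneous bound (\ref{eq1b}) from two ingredients: a pointwise-in-time $L^{p_1'}\to L^{p}$ decay estimate for the semigroup $\{e^{-t(-\triangle)^{\alpha}}\}_{t\ge 0}$, and the one-dimensional Hardy--Littlewood--Sobolev inequality applied in the time variable. The role of the scaling relation (\ref{eq1j}) is precisely to match the time-decay exponent produced by the first ingredient with the homogeneity required by the second, so the bulk of the work is setting up these two pieces and checking that their exponents are compatible.

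First I would record the decay estimate. Using $K_t^{\alpha}(x)=\mathcal{F}^{-1}(e^{-t|\xi|^{2\alpha}})(x)$ and the scaling identity $K_t^{\alpha}(x)=t^{-n/(2\alpha)}K_1^{\alpha}(t^{-1/(2\alpha)}x)$, one obtains $\|K_t^{\alpha}\|_{L^{m}}=t^{-\frac{n}{2\alpha}(1-\frac{1}{m})}\|K_1^{\alpha}\|_{L^{m}}$ for every $m\in[1,\infty]$, where $K_1^{\alpha}\in L^{m}$ for all such $m$ since it is bounded and decays like $|x|^{-n-2\alpha}$. Combining this with Young's convolution inequality, choosing $\frac{1}{m}=1-\left(\frac{1}{p_1'}-\frac{1}{p}\right)$, yields
\begin{equation*}
\|e^{-t(-\triangle)^{\alpha}}g\|_{L^{p}}\lesssim t^{-\frac{n}{2\alpha}\left(\frac{1}{p_1'}-\frac{1}{p}\right)}\|g\|_{L^{p_1'}},
\end{equation*}
which is legitimate because $1\le p_1'<p\le\infty$ forces $\frac{1}{m}\in[0,1]$.

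Next, writing $\beta=\frac{n}{2\alpha}\left(\frac{1}{p_1'}-\frac{1}{p}\right)$, I would use Minkowski's integral inequality to move the spatial norm inside the time integral and then apply the decay estimate pointwise in $t$:
\begin{equation*}
\left\|\int_{0}^{t}e^{-(t-s)(-\triangle)^{\alpha}}F(s,\cdot)\,ds\right\|_{L_{x}^{p}}\le\int_{0}^{t}(t-s)^{-\beta}\,\|F(s,\cdot)\|_{L_{x}^{p_1'}}\,ds.
\end{equation*}
Taking the $L_{t}^{q}(I)$ norm, the right-hand side is (after extending $s\mapsto\|F(s,\cdot)\|_{L_x^{p_1'}}$ by zero outside $I$) a one-sided Riemann--Liouville fractional integral dominated by the Riesz potential of order $1-\beta$ acting on a function lying in $L_{t}^{q_1'}$. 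The Hardy--Littlewood--Sobolev inequality then bounds it by $\|F\|_{L_{t}^{q_1'}(I;L_{x}^{p_1'})}$ exactly when $\frac{1}{q}=\frac{1}{q_1'}+\beta-1$, i.e. $\left(\frac{1}{q_1'}-\frac{1}{q}\right)+\beta=1$, which is (\ref{eq1j}) after rearrangement.

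The only delicate point is verifying the hypotheses of the Hardy--Littlewood--Sobolev inequality, and this is where the strict inequalities in the statement are essential: $p_1'<p$ gives $\beta>0$, while $q_1'<q$ together with $q_1'>1$ gives $0<1-\left(\frac{1}{q_1'}-\frac{1}{q}\right)=\beta<1$ and $1<q_1'<q<\infty$, placing us strictly inside the admissible range and away from the borderline cases $\beta\in\{0,1\}$, $q_1'=1$, or $q=\infty$, where the inequality degenerates to a weak-type bound. Thus the main obstacle is not analytic depth but the bookkeeping confirming that (\ref{eq1j}) is precisely the Hardy--Littlewood--Sobolev scaling relation and that the strict hypotheses exclude exactly its endpoint failures.
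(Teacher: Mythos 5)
Your proposal is correct and follows essentially the same path as the paper: the paper likewise combines the $L^{p_1'}_x\to L^{p}_x$ decay estimate (quoted there as Lemma \ref{2b}, which you rederive via kernel scaling and Young's inequality) with Minkowski's inequality and the one-dimensional Hardy--Littlewood--Sobolev inequality in the time variable, with (\ref{eq1j}) serving exactly as the HLS scaling relation. The only difference is cosmetic: you verify the HLS admissibility conditions ($0<\beta<1$, $1<q_1'<q<\infty$) explicitly, while the paper leaves that check implicit.
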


\begin{remark}\label{a6} Since $e^{-t(-\triangle)^{\alpha}}$ commutes with  $(-\triangle)^{\beta}$ and $(I-\triangle)^{\beta}$  for
 $\beta>0,$ if $(q,p)$ satisfies the assumption of Theorem \ref{a2} then (\ref{eq1a}) holds with $\|\cdot\|_{L^{p}(\mathbb{R}^{n})}$
 replaced  by  either $\|\cdot\|_{\dot{H}^{\beta, p}(\mathbb{R}^{n})}$ or $\|\cdot\|_{H^{\beta, p}(\mathbb{R}^{n})}$.
 Similarly, if $(q,p)$ and $(q_{1},p_{1})$ satisfy the assumption of Theorem \ref{a5},  then (\ref{eq1b})  holds with the same replacement.
\end{remark}

\begin{theorem} \label{a4}Let $n=2\alpha.$ Then \\
\begin{equation}\label{eq1i}
\|e^{-t(-\triangle)^{\alpha}}f\|_{L^{2}_{t}((0, \infty);BMO_{x}(\mathbb{R}^{n}))}\lesssim\|f\|_{L^{2}}.
\end{equation}
\end{theorem}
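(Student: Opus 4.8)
The plan is to exploit the fact that the pair $(q,p)=(2,\infty)$ excluded in Proposition~\ref{a2} is precisely the scale-invariant endpoint occurring when $n=2\alpha$, and to recover it—after enlarging $L^{\infty}_{x}$ to $BMO_{x}$—by a duality together with a $TT^{*}$ argument. Write $\mathcal H^{1}(\mathbb R^{n})$ for the real Hardy space, whose dual is $BMO(\mathbb R^{n})$ by the Fefferman duality theorem. Since $L^{2}_{t}$ is self-dual and $\mathcal H^{1}_{x}$ norms $BMO_{x}$ fiberwise, $L^{2}_{t}((0,\infty);\mathcal H^{1}_{x})$ is a norming subspace for $L^{2}_{t}((0,\infty);BMO_{x})$. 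Hence to prove (\ref{eq1i}) it suffices to establish
\[
\Big|\int_{0}^{\infty}\!\!\int_{\mathbb R^{n}} e^{-t(-\triangle)^{\alpha}}f(x)\,g(t,x)\,dx\,dt\Big|\lesssim \|f\|_{L^{2}}\,\|g\|_{L^{2}_{t}(\mathcal H^{1}_{x})}
\]
for all $f$ and all $g$ in a dense class. Because the pairing of a $BMO$ function against the mean-zero $\mathcal H^{1}$ datum is insensitive to additive constants, these manipulations are legitimate for, say, Schwartz $f$ and $g$ compactly supported in $t$, and a density argument then yields the general case.

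Next I would use the self-adjointness of $e^{-t(-\triangle)^{\alpha}}$ on $L^{2}$ (Lemma~\ref{2a}) to move the semigroup onto $g$, obtaining
\[
\int_{0}^{\infty}\!\big\langle f,\,e^{-t(-\triangle)^{\alpha}}g(t,\cdot)\big\rangle\,dt=\Big\langle f,\ \int_{0}^{\infty}e^{-t(-\triangle)^{\alpha}}g(t,\cdot)\,dt\Big\rangle,
\]
so that, by Cauchy--Schwarz, it is enough to prove the adjoint estimate
\[
\Big\|\int_{0}^{\infty}e^{-t(-\triangle)^{\alpha}}g(t,\cdot)\,dt\Big\|_{L^{2}}\lesssim \|g\|_{L^{2}_{t}(\mathcal H^{1}_{x})}.
\]
Squaring the left-hand side and expanding it as a double integral, the semigroup law and self-adjointness collapse the two factors into $e^{-(t+s)(-\triangle)^{\alpha}}$, giving
\[
\Big\|\int_{0}^{\infty}e^{-t(-\triangle)^{\alpha}}g(t,\cdot)\,dt\Big\|_{L^{2}}^{2}=\int_{0}^{\infty}\!\!\int_{0}^{\infty}\big\langle e^{-(t+s)(-\triangle)^{\alpha}}g(t,\cdot),\,g(s,\cdot)\big\rangle\,dt\,ds.
\]

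The two quantitative inputs enter here. First, bounding the modulus of the inner pairing by $\|e^{-(t+s)(-\triangle)^{\alpha}}g(t,\cdot)\|_{L^{\infty}}\,\|g(s,\cdot)\|_{L^{1}}$ and invoking the $L^{1}$--$L^{\infty}$ smoothing estimate $\|e^{-\tau(-\triangle)^{\alpha}}h\|_{L^{\infty}}\lesssim \tau^{-n/2\alpha}\|h\|_{L^{1}}$ for the fractional heat semigroup, together with the embedding $\mathcal H^{1}\hookrightarrow L^{1}$, the hypothesis $n=2\alpha$ makes the decay rate exactly $(t+s)^{-1}$; thus the double integral is controlled by
\[
\int_{0}^{\infty}\!\!\int_{0}^{\infty}\frac{\|g(t,\cdot)\|_{\mathcal H^{1}}\,\|g(s,\cdot)\|_{\mathcal H^{1}}}{t+s}\,dt\,ds.
\]
Second, the kernel $(t+s)^{-1}$ is homogeneous of degree $-1$, so the associated integral operator on $(0,\infty)$ is bounded on $L^{2}(0,\infty)$ by Hilbert's inequality; applying it with $h(t)=\|g(t,\cdot)\|_{\mathcal H^{1}}$ bounds the last display by $\pi\,\|g\|_{L^{2}_{t}(\mathcal H^{1}_{x})}^{2}$, which closes the estimate.

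The decisive point—and the only place the endpoint hypothesis is used—is the matching of homogeneities: the decay exponent $\tau^{-n/2\alpha}$ equals $\tau^{-1}$ precisely when $n=2\alpha$, which is exactly the homogeneity required for Hilbert's inequality to apply. I expect the main obstacle to be the rigorous setup of the duality rather than the inequalities themselves: one must justify that $L^{2}_{t}(\mathcal H^{1}_{x})$ norms $L^{2}_{t}(BMO_{x})$, handle the ambiguity of $BMO$ modulo constants against the mean-zero atoms of $\mathcal H^{1}$, and confirm by an approximation argument that the interchange of the $t$-integration with the $L^{2}$ pairing and the use of self-adjointness are all valid.
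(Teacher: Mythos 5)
Your proposal is correct in substance, but it follows a genuinely different route from the paper. The paper proves (\ref{eq1i}) with a Littlewood--Paley argument: using the characterization $BMO=\dot{F}^{0,2}_{\infty}$, it reduces the claim to frequency-localized bounds $\|e^{-t(-\triangle)^{\alpha}}P_{k}\widetilde{P}_{k}f\|_{L^{2}_{t}L^{\infty}_{x}}\lesssim\|\widetilde{P}_{k}f\|_{L^{2}}$, each of which follows from Cauchy--Schwarz on the Fourier side over the annulus supporting $\widehat{P_{k}f}$ (the hypothesis $n=2\alpha$ makes the $t$-integral of $e^{-2t|\xi|^{n}}$ scale exactly against the annulus measure), and then sums the squares over $k$. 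Your route---duality, $TT^{*}$, the $L^{1}\to L^{\infty}$ decay of Lemma \ref{2b}, and Hilbert's inequality---is also valid, and your ``decisive point'' is exactly the right one: because $e^{-t(-\triangle)^{\alpha}}$ is a self-adjoint \emph{semigroup}, the composed operator decays like $(t+s)^{-1}$ rather than $|t-s|^{-1}$, and the integral operator with kernel $(t+s)^{-1}$ is bounded on $L^{2}(0,\infty)$ (Hilbert's inequality) whereas the one with kernel $|t-s|^{-1}$ is not; this is precisely why the endpoint excluded from the abstract Keel--Tao theorem, and hence from Proposition \ref{a2}, can be recovered here. In fact this is the same mechanism as the paper's proof of Theorem \ref{111111}(b), which follows Tao's Lemma 2.5, and your argument is essentially that proof specialized to $p=\infty$. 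Two observations that would streamline your write-up: (i) you never use the cancellation structure of $\mathcal{H}^{1}$---only $\|g(s,\cdot)\|_{L^{1}}$ enters, via $\mathcal{H}^{1}\hookrightarrow L^{1}$---so your computation actually proves the stronger estimate $\|e^{-t(-\triangle)^{\alpha}}f\|_{L^{2}_{t}((0,\infty);L^{\infty}_{x})}\lesssim\|f\|_{L^{2}}$, i.e.\ (\ref{333333}) at $p=\infty$, from which (\ref{eq1i}) follows at once since $\|\cdot\|_{BMO}\lesssim\|\cdot\|_{L^{\infty}}$; (ii) consequently you may dualize against $L^{2}_{t}(L^{1}_{x})$ instead of $L^{2}_{t}(\mathcal{H}^{1}_{x})$, which disposes of the technical worries you flag at the end (the mod-constants ambiguity of $BMO$ and the norming property of $L^{2}_{t}(\mathcal{H}^{1}_{x})$), since pairing the bounded continuous function $e^{-t(-\triangle)^{\alpha}}f$ against $L^{2}_{t}(L^{1}_{x})$ raises no such subtleties. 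What the paper's proof buys is a self-contained argument at the level of the square-function characterization of $BMO$ that never invokes an $L^{\infty}_{x}$ endpoint; what yours buys is a shorter path and a formally stronger conclusion.
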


\begin{theorem}\label{111111}
(a) Let $1\leq r\leq p\leq \infty$ and $0<T<\infty.$ If $n<2\alpha,$ then
\begin{equation}\label{222222}
\int_{0}^{T}s^{-\frac{nr}{2p\alpha}}\|e^{-s(-\triangle)^{\alpha}}f\|_{L^{p}_{x}(\mathbb{R}^{n})}^{r}ds
\lesssim T^{1-\frac{n}{2\alpha}}\|f\|^{r}_{L^{r}(\mathbb{R}^{n})}.
\end{equation}
(b) Let $2< p\leq \infty.$ If $n=2\alpha,$ then
\begin{equation}\label{333333}
\int_{0}^{\infty}s^{-2/p}\|e^{-s(-\triangle)^{\alpha}}f\|_{L^{p}_{x}(\mathbb{R}^{n})}^{2}ds
\lesssim \|f\|^{2}_{L^{2}(\mathbb{R}^{n})}.
\end{equation}

\end{theorem}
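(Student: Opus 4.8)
The plan is to base everything on the self‑similar structure of the kernel $K^{\alpha}_{t}$. Since $\widehat{K^{\alpha}_{t}}(\xi)=e^{-t|\xi|^{2\alpha}}$, the change of variables $\xi\mapsto t^{-1/2\alpha}\xi$ in the Fourier integral yields the scaling $K^{\alpha}_{t}(x)=t^{-\frac{n}{2\alpha}}K^{\alpha}_{1}(t^{-\frac{1}{2\alpha}}x)$, whence $\|K^{\alpha}_{t}\|_{L^{m}}=t^{-\frac{n}{2\alpha}(1-\frac1m)}\|K^{\alpha}_{1}\|_{L^{m}}$ for every $1\le m\le\infty$, the norms $\|K^{\alpha}_{1}\|_{L^{m}}$ being finite because $K^{\alpha}_{1}$ is bounded and decays like $|x|^{-n-2\alpha}$. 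Inserting this into Young's inequality $\|K^{\alpha}_{s}\ast f\|_{L^{p}}\le\|K^{\alpha}_{s}\|_{L^{m}}\|f\|_{L^{r}}$ with $\frac1m=1+\frac1p-\frac1r$ (admissible exactly when $1\le r\le p\le\infty$) produces the basic decay estimate
\[
\|e^{-s(-\triangle)^{\alpha}}f\|_{L^{p}_{x}}\lesssim s^{-\frac{n}{2\alpha}\left(\frac1r-\frac1p\right)}\|f\|_{L^{r}},
\]
which drives both parts.

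For part (a) I would simply feed this decay estimate into the weighted integral. Raising the space norm to the $r$‑th power gives the time factor $s^{-\frac{n}{2\alpha}+\frac{nr}{2p\alpha}}$, and after multiplying by the weight $s^{-\frac{nr}{2p\alpha}}$ all $p$‑ and $r$‑dependence collapses to $s^{-\frac{n}{2\alpha}}$, so that
\[
\int_{0}^{T}s^{-\frac{nr}{2p\alpha}}\|e^{-s(-\triangle)^{\alpha}}f\|_{L^{p}_{x}}^{r}\,ds\lesssim\|f\|_{L^{r}}^{r}\int_{0}^{T}s^{-\frac{n}{2\alpha}}\,ds.
\]
Because $n<2\alpha$ forces $\frac{n}{2\alpha}<1$, the time integral converges at the origin and equals a constant multiple of $T^{1-\frac{n}{2\alpha}}$, which is precisely (\ref{222222}). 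No cancellation is used; the estimate is subcritical.

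Part (b) is the delicate one and is where I expect the main obstacle. When $n=2\alpha$ the inequality is scale invariant, and the crude pointwise bound now produces, after squaring and multiplying by $s^{-2/p}$, the factor $s^{-1}$, whose integral over $(0,\infty)$ diverges logarithmically at both endpoints; so the pointwise argument cannot work and one must exploit the semigroup cancellation through a $TT^{*}$ argument. Writing $Tf(s)=s^{-1/p}e^{-s(-\triangle)^{\alpha}}f$ as an operator $L^{2}_{x}\to L^{2}_{s}(L^{p}_{x})$, self‑adjointness of the semigroup gives $T^{*}G=\int_{0}^{\infty}s^{-1/p}e^{-s(-\triangle)^{\alpha}}G(s)\,ds$, and using $e^{-t(-\triangle)^{\alpha}}e^{-s(-\triangle)^{\alpha}}=e^{-(t+s)(-\triangle)^{\alpha}}$,
\[
TT^{*}G(t)=\int_{0}^{\infty}t^{-1/p}s^{-1/p}e^{-(t+s)(-\triangle)^{\alpha}}G(s)\,ds.
\]
Applying the $L^{p'}\to L^{p}$ case of the decay estimate (with $\frac{n}{2\alpha}=1$ and $\frac1{p'}-\frac1p=1-\frac2p$) bounds $\|TT^{*}G(t)\|_{L^{p}_{x}}$ by the scalar integral operator $\mathcal{K}g(t)=\int_{0}^{\infty}K(t,s)g(s)\,ds$ with kernel $K(t,s)=t^{-1/p}s^{-1/p}(t+s)^{-(1-2/p)}$ acting on $g(s)=\|G(s)\|_{L^{p'}_{x}}$.

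The final step, the heart of the matter, is to prove $\mathcal{K}$ bounded on $L^{2}(0,\infty)$. Since $K$ is symmetric and homogeneous of degree $-1$, Schur's test with weight $s^{-1/2}$ reduces boundedness to the convergence of $\int_{0}^{\infty}K(1,s)s^{-1/2}\,ds=\int_{0}^{\infty}s^{-\frac1p-\frac12}(1+s)^{-(1-\frac2p)}\,ds$. The integrand behaves like $s^{-\frac1p-\frac12}$ near $0$ and like $s^{\frac1p-\frac32}$ near infinity, so both tails converge if and only if $\frac1p<\frac12$, that is, $p>2$; this is exactly the hypothesis of part (b) and explains why the strict inequality $p>2$ (and not $p\ge2$) is forced. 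Unwinding the $TT^{*}$ identity then gives $\|Tf\|_{L^{2}_{s}L^{p}_{x}}\lesssim\|f\|_{L^{2}}$, which is (\ref{333333}).
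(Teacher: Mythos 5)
Your proposal is correct and follows essentially the same route as the paper: part (a) is exactly the decay estimate of Lemma \ref{2b} integrated in time against $s^{-n/2\alpha}$, and part (b) is the same $TT^{*}$ reduction (via self-adjointness, the semigroup property, and the $L^{p'}_{x}\to L^{p}_{x}$ decay bound with $\frac{n}{2\alpha}=1$) to the $L^{2}(0,\infty)$-boundedness of the scalar kernel $t^{-1/p}s^{-1/p}(t+s)^{-(1-2/p)}$. The only difference is cosmetic and lies in the last step: the paper bounds this kernel operator by dyadic decomposition in the ratio $s_{1}/s$ together with Schur's test following Tao, while you apply the classical Schur test with weight $s^{-1/2}$ directly to the symmetric, degree $-1$ homogeneous kernel; both arguments converge precisely under the hypothesis $p>2$.
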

\begin{remark} We can refer to  (\ref{333333})  as  a parabolic homogeneous Strichartz estimate.  The special case  $n=2$ of
 (\ref{333333})   was  proved by Tao in \cite{T. Tao 1}. On the other hand,  according to Miao-Yuan-Zhang's \cite[Proposition
 2.1]{C. Miao}, (\ref{333333}) amounts to the fact that $L^{2}(\mathbb{R}^{n})$  is embedded in the homogeneous Besov space
 $$
 \dot{B}_{p,2}^{s}(\mathbb{R}^{n}),\ \  s=\frac{(2-p)n}{2p},\ \  2<p\leq \infty.
 $$
\end{remark}

 Using  the imbedding of $\dot{H}^{\alpha,2}$  into $L^{\frac{2n}{n-2\alpha}}$ when $0<2\alpha<n,$ we prove the
following result.

 \begin{theorem}\label{a8} Let $n>2\alpha>0,$  $p\in[1, 2),$ $q \in (1, 2).$
 If  $\frac{1}{q}+\frac{n}{2\alpha}\left(\frac{1}{p}-\frac{1}{2}\right)=\frac{3}{2}$
then
\begin{equation}\label{eq1s}
\left\|\int_{0}^{t}e^{-(t-s)(-\triangle)^{\alpha}}F(s,x)ds\right
\|_{L^{2}_{t}(I;L^{\frac{2n}{n-2\alpha}}_{x})}
\lesssim\|F\|_{L^{q}_{t}(I;Z)}
\end{equation}
 holds with  $Z=\dot{H}^{\alpha, p}_{x}$ or ${H}^{\alpha, p}_{x}.$
\end{theorem}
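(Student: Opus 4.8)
The plan is to strip off the spatial exponent $\tfrac{2n}{n-2\alpha}$ using the homogeneous Sobolev embedding $\dot{H}^{\alpha,2}(\mathbb{R}^n)\hookrightarrow L^{2n/(n-2\alpha)}(\mathbb{R}^n)$, valid precisely because $0<2\alpha<n$, and then to reduce \eqref{eq1s} to a purely $L^2$-based inhomogeneous estimate covered by Theorem \ref{a5}. Writing $\mathcal{D}F(t,x)=\int_0^t e^{-(t-s)(-\triangle)^\alpha}F(s,x)\,ds$ for the Duhamel operator, the embedding gives, for each fixed $t$,
\begin{equation*}
\|\mathcal{D}F(t,\cdot)\|_{L^{2n/(n-2\alpha)}_x}\lesssim \|\mathcal{D}F(t,\cdot)\|_{\dot{H}^{\alpha,2}_x}=\|(-\triangle)^{\alpha/2}\mathcal{D}F(t,\cdot)\|_{L^2_x}.
\end{equation*}
Taking the $L^2_t(I)$ norm of both sides reduces the left-hand side of \eqref{eq1s} to controlling $\|(-\triangle)^{\alpha/2}\mathcal{D}F\|_{L^2_t(I;L^2_x)}$.

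Next I would commute the fractional Laplacian through the semigroup. Since $(-\triangle)^{\alpha/2}$ is a Fourier multiplier commuting with $e^{-(t-s)(-\triangle)^\alpha}$ (cf. Remark \ref{a6}), on the Fourier side one has $|\xi|^\alpha e^{-(t-s)|\xi|^{2\alpha}}=e^{-(t-s)|\xi|^{2\alpha}}|\xi|^\alpha$, so
\begin{equation*}
(-\triangle)^{\alpha/2}\mathcal{D}F(t,\cdot)=\int_0^t e^{-(t-s)(-\triangle)^\alpha}(-\triangle)^{\alpha/2}F(s,\cdot)\,ds=\mathcal{D}\big[(-\triangle)^{\alpha/2}F\big](t,\cdot).
\end{equation*}
This identity is immediate for $F$ Schwartz in the space variable and extends by density. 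Hence the left-hand side of \eqref{eq1s} is bounded by $\|\mathcal{D}G\|_{L^2_t(I;L^2_x)}$ with $G=(-\triangle)^{\alpha/2}F$.

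I would then invoke Theorem \ref{a5}, taking its output pair to be $(2,2)$ and its input exponents $(q_1',p_1')$ to be the present $(q,p)$. The admissibility hypotheses hold: $1\le p_1'=p<2\le\infty$ because $p\in[1,2)$, and $1<q_1'=q<2<\infty$ because $q\in(1,2)$. The scaling relation \eqref{eq1j} becomes $\big(\tfrac1q-\tfrac12\big)+\tfrac{n}{2\alpha}\big(\tfrac1p-\tfrac12\big)=1$, which is exactly the hypothesis $\tfrac1q+\tfrac{n}{2\alpha}\big(\tfrac1p-\tfrac12\big)=\tfrac32$ after adding $\tfrac12$ to each side. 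Theorem \ref{a5} thus yields $\|\mathcal{D}G\|_{L^2_t(I;L^2_x)}\lesssim\|G\|_{L^q_t(I;L^p_x)}$, and unwinding the substitution gives $\|G\|_{L^q_t(I;L^p_x)}=\|(-\triangle)^{\alpha/2}F\|_{L^q_t(I;L^p_x)}=\|F\|_{L^q_t(I;\dot{H}^{\alpha,p}_x)}$. This proves \eqref{eq1s} for $Z=\dot{H}^{\alpha,p}_x$; the case $Z=H^{\alpha,p}_x$ follows from the continuous embedding $H^{\alpha,p}\hookrightarrow\dot{H}^{\alpha,p}$.

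I expect the argument to be essentially a bookkeeping reduction to Theorem \ref{a5}, so the only genuinely delicate points are the rigorous justification of the commutation (best handled by density from $\mathscr{S}$) and the endpoint $p=1$, where the multiplier $|\xi|^\alpha(1+|\xi|^2)^{-\alpha/2}$ governing $H^{\alpha,1}\hookrightarrow\dot{H}^{\alpha,1}$ is not smooth enough at the origin for a naive Mikhlin argument and needs separate care.
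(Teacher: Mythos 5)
Your proposal is correct and takes essentially the same route as the paper: the Sobolev embedding $\dot{H}^{\alpha,2}\hookrightarrow L^{2n/(n-2\alpha)}$, commutation of $(-\triangle)^{\alpha/2}$ with the semigroup via Lemma \ref{2a}, and then an $L^{p}_{x}\to L^{2}_{x}$ smoothing step closed by the Hardy--Littlewood--Sobolev inequality in time; the only difference is that you package this last step as an application of Theorem \ref{a5} with output pair $(2,2)$, whereas the paper inlines the identical Lemma \ref{2b} decay-plus-HLS computation. Your flagged worry about $p=1$ is resolvable without Mikhlin, since $|\xi|^{\alpha}(1+|\xi|^{2})^{-\alpha/2}=\left(1-(1+|\xi|^{2})^{-1}\right)^{\alpha/2}$ expands into an absolutely convergent series of Bessel-kernel multipliers, so $\|(-\triangle)^{\alpha/2}f\|_{L^{1}}\lesssim\|(I-\triangle)^{\alpha/2}f\|_{L^{1}}$ holds and the inhomogeneous case follows as you claim.
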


 Using the Littlewood-Paley decomposition, we  establish the following estimates in  Besov spaces.
\begin{corollary}\label{a7}
(a) Let $(q,p,2)$ be $\frac{n}{2\alpha}-$admissible. If $q\geq 2$  and $(q,p,\frac{n}{2\alpha})$ is not $(2,\infty, 1),$ then \\
\begin{equation}\label{eq1k}
\|e^{-t(-\triangle)^{\alpha}}f\|_{L^{q}_{t}(I; X_{1})}\lesssim
\|f\|_{X_{2}}
\end{equation}
 holds with $(X_{1},X_{2})=(B^{s}_{p, 2}, {B}^{s}_{2, 2})$ or $(\dot{B}^{s}_{p, 2}, \dot{B}^{s}_{2, 2}).$\\
 (b) Let $1\leq p_{1}'<p\leq \infty $  and $1< q_{1}'< q<\infty .$ If $(q,p)$ and $(q_{1},p_{1})$ satisfy  (\ref{eq1j})
 and $q_{1}\geq 2,$  then
\begin{equation}\label{eq1m}
\left\|\int_{0}^{t}e^{-(t-s)(-\triangle)^{\alpha}}F(s, x)ds\right\|_{L^{q}_{t}(I;Y_{1})} \lesssim\|F\|_{L^{q_{1}'}_{t}(I;Y_{2})}
\end{equation}
 holds with  $(Y_{1},Y_{2})=(B^{s}_{p,2},B^{s}_{ p_{1}',2})$ or $(\dot{B}^{s}_{p,2}, \dot{B}^{s}_{ p_{1}', 2}).$
\end{corollary}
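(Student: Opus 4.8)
The plan is to reduce both estimates, frequency by frequency, to the scalar Strichartz bounds already established (Proposition~\ref{a2} for part (a) and Theorem~\ref{a5} for part (b)) by means of the Littlewood--Paley decomposition. The key structural fact is that each projection $\triangle_j=\Psi_j\ast\,\cdot\,$, as well as the low-frequency block $\triangle_{-1}=\Phi\ast\,\cdot\,$, is a Fourier multiplier in the space variable; since $e^{-t(-\triangle)^{\alpha}}$ is also a spatial multiplier at each fixed $t$, and $\triangle_j$ does not depend on $t$ or $s$, the projection commutes with both the free propagator and the Duhamel operator: $\triangle_j e^{-t(-\triangle)^{\alpha}}f=e^{-t(-\triangle)^{\alpha}}\triangle_j f$ and $\triangle_j\int_0^t e^{-(t-s)(-\triangle)^{\alpha}}F\,ds=\int_0^t e^{-(t-s)(-\triangle)^{\alpha}}(\triangle_j F)(s)\,ds$.

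For part (a) I would write $\|e^{-t(-\triangle)^{\alpha}}f\|_{\dot B^{s}_{p,2}}=\big(\sum_j 2^{2js}\|e^{-t(-\triangle)^{\alpha}}\triangle_j f\|_{L^{p}_x}^{2}\big)^{1/2}$, take the $L^{q}_t$ norm, and invoke Minkowski's inequality --- valid precisely because $q\ge 2$ --- to pass from $\|\cdot\|_{L^{q}_t(\ell^{2}_j L^{p}_x)}$ to $\|\cdot\|_{\ell^{2}_j(L^{q}_t L^{p}_x)}$. Proposition~\ref{a2}, whose hypotheses coincide with those assumed here, controls each block by $\|e^{-t(-\triangle)^{\alpha}}\triangle_j f\|_{L^{q}_t(L^{p}_x)}\lesssim\|\triangle_j f\|_{L^{2}}$, and reassembling the weighted $\ell^{2}_j$ sum produces exactly $\|f\|_{\dot B^{s}_{2,2}}$. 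The inhomogeneous pair $(B^{s}_{p,2},B^{s}_{2,2})$ is identical once the block $\triangle_{-1}=\Phi\ast\,\cdot\,$ is treated separately, again via Proposition~\ref{a2}.

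Part (b) runs along the same lines, with Theorem~\ref{a5} supplying the per-frequency bound $\|\triangle_j\int_0^t e^{-(t-s)(-\triangle)^{\alpha}}F\,ds\|_{L^{q}_t(L^{p}_x)}\lesssim\|\triangle_j F\|_{L^{q_1'}_t(L^{p_1'}_x)}$. Summing against the weights $2^{2js}$ in $\ell^{2}_j$ and applying Minkowski on both ends, I would use $q\ge 2$ to bound the output $\|\cdot\|_{L^{q}_t(\ell^{2}_j L^{p}_x)}$ by $\|\cdot\|_{\ell^{2}_j(L^{q}_t L^{p}_x)}$, and the hypothesis $q_1\ge 2$, i.e. $q_1'\le 2$, to bound the input $\|\cdot\|_{\ell^{2}_j(L^{q_1'}_t L^{p_1'}_x)}$ by $\|\cdot\|_{L^{q_1'}_t(\ell^{2}_j L^{p_1'}_x)}=\|F\|_{L^{q_1'}_t(\dot B^{s}_{p_1',2})}$. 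Chaining these inequalities gives (\ref{eq1m}); the inhomogeneous case again differs only in the $\triangle_{-1}$ term.

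The one genuinely load-bearing step --- and the only place where conditions on $q$ and $q_1$ beyond the scalar estimates enter --- is the pair of Minkowski interchanges between the $\ell^{2}$ frequency sum and the time integrals; I would be careful that the direction of each interchange is matched to the sign of $q-2$ and $2-q_1'$, since these are opposite. A minor technical point is that, for the homogeneous spaces, the manipulations should first be carried out for $f\in\mathscr{S}_0$ (resp. for $F$ with values in $\mathscr{S}_0$), so that all dyadic series converge in $\mathscr{S}'_0$ and the multiplier identities above are justified, the general case then following by density.
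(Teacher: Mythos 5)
Your proposal is correct and follows essentially the same route as the paper: commuting the Littlewood--Paley projections with the semigroup and Duhamel operator, applying Proposition~\ref{a2} (resp.\ Theorem~\ref{a5}) frequency by frequency, and using the two Minkowski interchanges $L^{q}_t(\ell^2_j)\to\ell^2_j(L^q_t)$ (from $q\ge 2$) and $\ell^2_j(L^{q_1'}_t)\to L^{q_1'}_t(\ell^2_j)$ (from $q_1'\le 2$), which is exactly how the paper exploits $k=q/2\ge 1$ and $k=2/q_1'\ge 1$. Your closing remarks on the direction of the interchanges and on density for $\mathscr{S}_0$ data are sensible refinements of the same argument, not a different method.
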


\begin{corollary}\label{88}
 Let $n\geq 2\alpha,$ $I=[0,T]$ or $[0,\infty).$   Suppose $V$ is a real potential and
 $$
 V\in L^{r}_{t}(I; L^{s}_x),\  \frac{1}{r}+\frac{n}{2\alpha s}=1,
 $$
  for some fixed $r\in(1,2)\cup(2,\infty)$ and $s\in(\frac{n}{2\alpha}, \frac{n}{\alpha})\cup(\frac{n}{\alpha},\infty).$ Let
  $f\in L^{2}$ and $F\in L^{q_{1}'}_{t}(I; L^{p_{1}'}_{x})$ for some $\frac{n}{2\alpha}-$admissible triplet $(q_{1}, p_{1}, 2)$
   with  $p_{1}'\in[1,2)$ and $q_{1}'\in(1,2).$ Then equation (\ref{eq1111d}) has a unique solution  $v(t,x)$ satisfying
\begin{equation}\label{1111c}
 \|v\|_{L^{q}_{t}(I; L^{p}_{x})}\lesssim \|f\|_{L^{2}}+\|F\|_{L^{q_{1}'}_{t}(I; L^{p_{1}'}_{x})},
\end{equation}
  for all   $\frac{n}{2\alpha}-$admissible triplets $(q, p, 2)$ with $2\leq q<\infty.$
\end{corollary}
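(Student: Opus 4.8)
The plan is to realize the mild solution of (\ref{eq1111d}) as a fixed point of
\[
\Phi(v)(t)=e^{-t(-\triangle)^{\alpha}}f+\int_{0}^{t}e^{-(t-s)(-\triangle)^{\alpha}}F(s)\,ds-\int_{0}^{t}e^{-(t-s)(-\triangle)^{\alpha}}(Vv)(s)\,ds,
\]
closing a Banach contraction on a single Strichartz space in which the potential term $Vv$ is treated as a forcing term to be fed into the inhomogeneous estimate of Theorem \ref{a5}. The engine of the argument is that the scaling condition $\frac{1}{r}+\frac{n}{2\alpha s}=1$ is precisely what makes multiplication by $V$ send an admissible Strichartz space back into the dual of the \emph{same} admissible space. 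Concretely, I would fix the reference triplet $(q_{0},p_{0},2)$ by $\frac{1}{q_{0}}=\frac12\bigl(1-\frac1r\bigr)$ and $\frac{1}{p_{0}}=\frac12\bigl(1-\frac1s\bigr)$; a short computation using $\frac{1}{r}+\frac{n}{2\alpha s}=1$ shows $(q_{0},p_{0},2)$ is $\frac{n}{2\alpha}$-admissible with $2<q_{0}<\infty$ and $2<p_{0}<\infty$ (this is where $n\ge2\alpha$ and the exclusions of $r=2,\infty$ and $s=n/\alpha$ are used, keeping the pair strictly inside the admissible range and off the Keel--Tao endpoint $(2,\infty)$). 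Since $\frac{1}{p_{0}'}=\frac1s+\frac{1}{p_{0}}$ and $\frac{1}{q_{0}'}=\frac1r+\frac{1}{q_{0}}$, Hölder in $x$ and $t$ gives, for every subinterval $J\subseteq I$,
\[
\|Vv\|_{L^{q_{0}'}_{t}(J;L^{p_{0}'}_{x})}\le\|V\|_{L^{r}_{t}(J;L^{s}_{x})}\,\|v\|_{L^{q_{0}}_{t}(J;L^{p_{0}}_{x})}.
\]

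Working on $X(J)=L^{\infty}_{t}(J;L^{2}_{x})\cap L^{q_{0}}_{t}(J;L^{p_{0}}_{x})$, I would estimate the three terms of $\Phi$ separately. Proposition \ref{a2} together with the contraction property of the semigroup on $L^{2}$ (Lemma \ref{2a}) bounds the homogeneous term by $\|f\|_{L^{2}}$; Theorem \ref{a5} bounds the $F$-term by $\|F\|_{L^{q_{1}'}_{t}L^{p_{1}'}_{x}}$, since two admissible pairs automatically satisfy (\ref{eq1j}) and the required strict inequalities hold because $q_{1}'\in(1,2)<q_{0}$ and $p_{1}'\in[1,2)<p_{0}$; and Theorem \ref{a5} applied to the forcing $Vv$, combined with the Hölder bound above, controls the potential term by $C\|V\|_{L^{r}_{t}(J;L^{s}_{x})}\|v\|_{X(J)}$ (here $q_{0}'<q_{0}$ and $p_{0}'<p_{0}$ follow from $q_{0},p_{0}>2$, and the $L^{\infty}_{t}L^{2}_{x}$ component of the retarded term is handled by the standard energy/duality consequence of Proposition \ref{a2}). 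Hence $\Phi$ maps $X(J)$ to itself with $\|\Phi(v_{1})-\Phi(v_{2})\|_{X(J)}\le C\|V\|_{L^{r}_{t}(J;L^{s}_{x})}\|v_{1}-v_{2}\|_{X(J)}$, so $\Phi$ is a contraction as soon as $C\|V\|_{L^{r}_{t}(J;L^{s}_{x})}<\frac12$.

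The main obstacle is that $\|V\|_{L^{r}_{t}(I;L^{s}_{x})}$ need not be small, so the contraction is only local. Since $r<\infty$, the measure $E\mapsto\int_{E}\|V(t)\|_{L^{s}_{x}}^{r}\,dt$ is finite and absolutely continuous, so I can split $I$ into finitely many consecutive intervals $I_{1},\dots,I_{N}$ on each of which $\|V\|_{L^{r}_{t}(I_{k};L^{s}_{x})}$ lies below the contraction threshold, the last (possibly unbounded) interval being controlled by convergence of the tail integral. Solving by contraction on $I_{1}$ with data $f$ yields $v\in X(I_{1})\subset C(I_{1};L^{2})$, so that $v(T_{1})\in L^{2}$ is admissible data on $I_{2}$; iterating over the $N$ intervals and concatenating produces a global solution with $\|v\|_{L^{q_{0}}_{t}(I;L^{p_{0}}_{x})}\lesssim\|f\|_{L^{2}}+\|F\|_{L^{q_{1}'}_{t}L^{p_{1}'}_{x}}$, the implicit constant depending on $N$ and hence on $\|V\|_{L^{r}_{t}(I;L^{s}_{x})}$. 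Uniqueness follows on each $I_{k}$ by applying the same contraction estimate to the difference of two solutions and then propagating across the finitely many intervals.

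Finally, to pass from the reference pair to an arbitrary $\frac{n}{2\alpha}$-admissible triplet $(q,p,2)$ with $2\le q<\infty$, I would insert the solution back into Duhamel's formula and estimate directly: Proposition \ref{a2} gives $\|e^{-t(-\triangle)^{\alpha}}f\|_{L^{q}_{t}L^{p}_{x}}\lesssim\|f\|_{L^{2}}$, Theorem \ref{a5} gives $\bigl\|\int_{0}^{t}e^{-(t-s)(-\triangle)^{\alpha}}F\,ds\bigr\|_{L^{q}_{t}L^{p}_{x}}\lesssim\|F\|_{L^{q_{1}'}_{t}L^{p_{1}'}_{x}}$, and Theorem \ref{a5} with forcing $Vv$ gives $\bigl\|\int_{0}^{t}e^{-(t-s)(-\triangle)^{\alpha}}Vv\,ds\bigr\|_{L^{q}_{t}L^{p}_{x}}\lesssim\|V\|_{L^{r}_{t}L^{s}_{x}}\|v\|_{L^{q_{0}}_{t}L^{p_{0}}_{x}}$, the last quantity already being controlled by the right side of (\ref{1111c}) through the reference-pair bound. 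Summing the three contributions yields (\ref{1111c}). The only delicate point here is the endpoint $q=2$ when $n=2\alpha$, where the homogeneous term sits at the forbidden pair $(2,\infty)$; there one restricts to $q>2$ or replaces $L^{\infty}_{x}$ by $BMO_{x}$ via Theorem \ref{a4}.
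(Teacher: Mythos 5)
Your proposal is correct and rests on the same skeleton as the paper's proof: treat $Vv$ as a forcing term, control it by H\"older in a dual Strichartz norm using the scaling relation $\frac1r+\frac{n}{2\alpha s}=1$, run a Banach contraction on a time interval where $\|V\|_{L^r_tL^s_x}$ is small, and remove the smallness by partitioning $I$ into finitely many such intervals (possible since $r<\infty$) and propagating the $L^2$ norm across their endpoints. The execution differs in a worthwhile way. The paper pairs $V$ against the endpoint-type norm $L^2_t(J;L^{\frac{2n}{n-2\alpha}}_x)$ when $r\in(2,\infty)$ and against $L^\infty_t(J;L^2_x)$ when $r\in(1,2)$, works in the intersection $X=L^k_t(J;L^l_x)\cap L^2_t(J;L^{\frac{2n}{n-2\alpha}}_x)$ with $k\geq q$, recovers general admissible pairs by interpolation, and needs a separate remark when $n=2\alpha$ (where $L^{\frac{2n}{n-2\alpha}}_x$ degenerates). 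Your single self-dual pair $(q_0,p_0)$, for which multiplication by $V$ maps $L^{q_0}_tL^{p_0}_x$ into its own dual $L^{q_0'}_tL^{p_0'}_x$, eliminates the case split in $r$ and the $n=2\alpha$ caveat in the contraction space at once (indeed it would even tolerate $r=2$, so your parenthetical that the exclusions $r\neq2$, $s\neq n/\alpha$ are ``used'' in your construction is inaccurate --- it is the paper's argument, not yours, that needs them); the price is the extra a posteriori step of re-inserting $v$ into Duhamel's formula to reach an arbitrary admissible $(q,p)$ with $2\le q<\infty$, which is legitimate because any two $\frac{n}{2\alpha}$-admissible pairs satisfy (\ref{eq1j}) and $q_0',p_0'<2\le q$, $p_0'<2\le p$. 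Two points you assert but should justify (both are also glossed over in the paper, e.g.\ in the passage from (\ref{3b}) to (\ref{3c})): the $L^\infty_tL^2_x$ bound on the retarded integral cannot come from Theorem \ref{a5}, whose hypotheses force $q<\infty$ --- it requires the duality argument $|\langle\int_0^tU(t-s)G(s)\,ds,\phi\rangle|\le\|G\|_{L^{q_0'}_tL^{p_0'}_x}\|U(\cdot)\phi\|_{L^{q_0}_tL^{p_0}_x}\lesssim\|G\|_{L^{q_0'}_tL^{p_0'}_x}\|\phi\|_{L^2}$ based on self-adjointness (Lemma \ref{2a}) and Proposition \ref{a2}; and restarting the iteration from $v(T_k)$ needs the trace at the endpoint to be well defined in $L^2$, i.e.\ (weak) continuity in time rather than mere $L^\infty_tL^2_x$ membership. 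Finally, you correctly isolate the same limitation the paper leaves implicit: when $n=2\alpha$ the admissible pair with $q=2$ is $(2,\infty)$, where the homogeneous estimate of Proposition \ref{a2} is unavailable, so (\ref{1111c}) at that pair requires either $q>2$ or the $BMO$ substitute of Theorem \ref{a4}.
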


 We can prove the following estimate by estimating $K_{t}^{\alpha}(x)$ in mixed norm spaces.
\begin{theorem}\label{a9} Let $\alpha>0,$ $0<T< \infty,$ $1\leq p_{1}'<p\leq \infty , $
 $1\leq q_{1}'<q\leq \infty,$
 $\frac{1}{r}=\frac{1}{p}+\frac{1}{p_{1}}$
 and
$\frac{1}{h}=\frac{1}{q}+\frac{1}{q_{1}}.$
 If
$$
0<\frac{nh}{2\alpha}\left(1-\frac{1}{r}\right)< 1,
$$
 then
\begin{equation}\label{eq1u}
\left\|\int_{0}^{t}e^{-(t-s)(-\triangle)^{\alpha}}F(s,x)ds\right
\|_{L^{q}_{t}([0, T); X)} \lesssim
T^{\frac{1}{h}-\frac{n}{2\alpha}(1-\frac{1}{r})
}\|F\|_{L^{q_{1}'}_{t}([0, T); Y)}
\end{equation}
 holds with $(X,Y)=(L^{p}_{x},L^{p'_{1}}_{x}),$
 $(\dot{H}^{\beta,p}_{x},\dot{H}^{\beta,p'_{1}}_{x})$ or
 $(H^{\beta,p}_{x},H^{\beta,p'_{1}}_{x})$ for all $\beta>0.$
\end{theorem}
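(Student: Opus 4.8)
The plan is to realise the Duhamel operator as a space--time convolution against the kernel $\mathcal K(t,x):=\mathbf{1}_{(0,T)}(t)\,K_{t}^{\alpha}(x)$ and to control it by a single mixed-norm Young inequality, after recording the $L^{s}$-size of the fractional heat kernel. Write $\gamma:=\frac{n}{2\alpha}\bigl(1-\tfrac1r\bigr)$, so that the hypothesis is $0<\gamma h<1$ and the target power of $T$ is $\frac1h-\gamma$. I would first record the kernel bound: since $\widehat{K_{t}^{\alpha}}(\xi)=e^{-t|\xi|^{2\alpha}}$, the scaling $K_{t}^{\alpha}(x)=t^{-n/2\alpha}K_{1}^{\alpha}(t^{-1/2\alpha}x)$ together with $K_{1}^{\alpha}\in L^{1}\cap L^{\infty}\subset L^{s}$ for every $1\le s\le\infty$ gives
\[
\|K_{t}^{\alpha}\|_{L^{s}_{x}}=t^{-\frac{n}{2\alpha}\left(1-\frac1s\right)}\|K_{1}^{\alpha}\|_{L^{s}}\lesssim t^{-\frac{n}{2\alpha}\left(1-\frac1s\right)}.
\]
The hypotheses $p_{1}'<p$ and $q_{1}'<q$ force $\frac1r=\frac1p+\frac1{p_{1}}<1$ and $\frac1h=\frac1q+\frac1{q_{1}}<1$, so both $r$ and $h$ lie in $(1,\infty)$ and are admissible exponents below.

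For the case $(X,Y)=(L^{p}_{x},L^{p_{1}'}_{x})$ I would apply Young's inequality in the two variables. The exponent identities $1+\frac1p=\frac1r+\frac1{p_{1}'}$ (space) and $1+\frac1q=\frac1h+\frac1{q_{1}'}$ (time) are exactly the defining relations $\frac1r=\frac1p+\frac1{p_1}$ and $\frac1h=\frac1q+\frac1{q_1}$. Hence the mixed-norm Young inequality --- equivalently, spatial Young for each fixed pair $s<t$ followed by Minkowski's integral inequality and temporal Young --- yields
\[
\left\|\int_{0}^{t}e^{-(t-s)(-\triangle)^{\alpha}}F(s,\cdot)\,ds\right\|_{L^{q}_{t}([0,T);L^{p}_{x})}\le\|\mathcal K\|_{L^{h}_{t}L^{r}_{x}}\,\|F\|_{L^{q_{1}'}_{t}([0,T);L^{p_{1}'}_{x})},
\]
where the truncated time-convolution over $0<s<t$ is dominated by the full convolution against $|\mathcal K|$.

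It then remains to compute $\|\mathcal K\|_{L^{h}_{t}L^{r}_{x}}$. By the kernel bound with $s=r$,
\[
\|\mathcal K\|_{L^{h}_{t}L^{r}_{x}}=\left(\int_{0}^{T}\|K_{t}^{\alpha}\|_{L^{r}_{x}}^{h}\,dt\right)^{1/h}\lesssim\left(\int_{0}^{T}t^{-\gamma h}\,dt\right)^{1/h},
\]
and this integral is finite precisely when $\gamma h<1$, i.e.\ under the hypothesis $\frac{nh}{2\alpha}\bigl(1-\tfrac1r\bigr)<1$, in which case it equals $\bigl(\tfrac{T^{1-\gamma h}}{1-\gamma h}\bigr)^{1/h}\simeq T^{\frac1h-\gamma}=T^{\frac1h-\frac{n}{2\alpha}(1-1/r)}$. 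This gives the estimate for $(L^{p}_{x},L^{p_{1}'}_{x})$. For $(\dot H^{\beta,p}_{x},\dot H^{\beta,p_{1}'}_{x})$ and $(H^{\beta,p}_{x},H^{\beta,p_{1}'}_{x})$ I would invoke, as in Remark \ref{a6}, that $e^{-t(-\triangle)^{\alpha}}$ commutes with $(-\triangle)^{\beta/2}$ and with $(I-\triangle)^{\beta/2}$: moving the multiplier inside the Duhamel integral reduces these two cases to the $L^{p}$ estimate applied to $(-\triangle)^{\beta/2}F$ and to $(I-\triangle)^{\beta/2}F$ respectively.

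The analysis is light; the one point to watch is the compatibility of the side conditions. Admissibility of Young's inequality in space and time requires $r,h\ge1$, which is guaranteed automatically by $p_{1}'<p$ and $q_{1}'<q$; the only genuine constraint is the borderline integrability $\gamma h<1$ of the purely temporal factor $t^{-\gamma}$, which is precisely the stated hypothesis and simultaneously pins down the exponent $\frac1h-\gamma$ of $T$. Thus the main (and essentially only) obstacle is the exponent bookkeeping ensuring that the three relations --- the definition of $r$, the definition of $h$, and the two Young balances --- are mutually consistent, rather than any delicate estimation.
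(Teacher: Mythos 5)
Your proposal is correct and takes essentially the same route as the paper's proof: spatial Young's inequality followed by temporal Young's inequality (with the exponent balances $1+\frac1p=\frac1r+\frac1{p_1'}$ and $1+\frac1q=\frac1h+\frac1{q_1'}$, which are exactly the definitions of $r$ and $h$) reduces everything to computing $\|K_t^\alpha\|_{L^h_t([0,T);L^r_x)}$, which the scaling $K_t^\alpha(x)=t^{-n/2\alpha}K_1^\alpha(t^{-1/2\alpha}x)$ evaluates as $\lesssim T^{\frac1h-\frac{n}{2\alpha}(1-\frac1r)}$ precisely under the hypothesis $\frac{nh}{2\alpha}\bigl(1-\frac1r\bigr)<1$, with the Sobolev cases handled by commuting $(-\triangle)^{\beta/2}$ and $(I-\triangle)^{\beta/2}$ with the semigroup. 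The one fact you assert rather than prove, $K_1^\alpha\in L^1\cap L^\infty$ (hence $L^r$), is exactly what the paper imports from Miao--Yuan--Zhang's Lemma 2.1, so the two arguments coincide.
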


 In the rest of this paper, we use the notation $L^{p}$ indiscriminately for scalar and vector valued functions.

\begin{proposition}\label{propostion} Let $\alpha>1/2$ and $T>0.$ Assume that $u,v\in
 L^{q}([0,T];L^{p})$ with $p,q$ satisfying
$$
\max\left\{\frac{n}{2\alpha-1},2\right\}<p<\infty,\ \  2\alpha-1=\frac{2\alpha}{q}+\frac{n}{p}.
$$
 Then  the operator
 $$
 B(u,v)=\int^{t}_{0}e^{-(t-s)(-\triangle)^{\beta}}P\nabla\cdot(u\otimes v)ds
 $$
 is bounded from $L^{q}([0,T];L^{p})\times L^{q}([0,T];L^{p})$ to
 $L^{q}([0,T];L^{p})$ with
$$
 \|B(u,v)\|_{L^{q}([0,T];L^{p})}\lesssim\|u\|_{L^{q}([0,T];L^{p})}\|v\|_{L^{q}([0,T];L^{p})}.
$$
\end{proposition}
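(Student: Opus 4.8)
The plan is to reduce this scale-invariant bilinear estimate to a one–dimensional fractional integration inequality in time, after extracting the decay of the semigroup with the divergence absorbed into its kernel (I read the $\beta$ appearing in the definition of $B$ as $\alpha$). First I would use that the Helmholtz--Weyl projection $P$, being a matrix of zero–order Fourier multipliers $\{\delta_{j,k}+R_jR_k\}$, is bounded on $L^p_x$ for $1<p<\infty$ and commutes with both $\nabla$ and $e^{-(t-s)(-\triangle)^{\alpha}}$; since $2<p<\infty$ this applies here. Hence it suffices to control $\|e^{-(t-s)(-\triangle)^{\alpha}}\nabla\cdot(u\otimes v)(s)\|_{L^p_x}$, and I would move the divergence onto the kernel, writing it componentwise as $\sum_j(\partial_jK^{\alpha}_{t-s})\ast(u_iv_j)(s)$. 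From the scaling $K^{\alpha}_t(x)=t^{-n/2\alpha}K^{\alpha}_1(t^{-1/2\alpha}x)$ and the fact that $\nabla K^{\alpha}_1\in L^1\cap L^\infty$, one obtains $\|\nabla K^{\alpha}_t\|_{L^{\rho}}\lesssim t^{-\frac{1}{2\alpha}-\frac{n}{2\alpha}(1-1/\rho)}$ for every $1\le\rho\le\infty$. Choosing $\rho=p'$, Young's inequality in space together with the Hölder bound $\|u(s)\otimes v(s)\|_{L^{p/2}_x}\le\|u(s)\|_{L^p_x}\|v(s)\|_{L^p_x}$ (where $p>2$ guarantees $p/2\ge 1$) yields the pointwise–in–time estimate
\[
\|B(u,v)(t)\|_{L^p_x}\lesssim\int_0^t (t-s)^{-\mu}\,\|u(s)\|_{L^p_x}\,\|v(s)\|_{L^p_x}\,ds,\qquad \mu=\frac{1}{2\alpha}\Big(1+\frac{n}{p}\Big).
\]

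The decisive algebraic point is that the admissibility relation $2\alpha-1=\frac{2\alpha}{q}+\frac{n}{p}$ forces $1+\frac{n}{p}=2\alpha\big(1-\tfrac1q\big)$, so that $\mu=1-\tfrac1q\in(0,1)$. Thus the right–hand side above is precisely a fractional integral of order $1/q$ in $t$ applied to $g(s):=\|u(s)\|_{L^p_x}\|v(s)\|_{L^p_x}$, which by Hölder in time satisfies $\|g\|_{L^{q/2}_t([0,T])}\le\|u\|_{L^q_tL^p_x}\|v\|_{L^q_tL^p_x}$. Applying the one–dimensional Hardy--Littlewood--Sobolev inequality, using that the fractional integral of order $1/q$ maps $L^{q/2}$ into $L^q$ because $\frac{1}{q}=\frac{1}{q/2}-\frac{1}{q}$, gives $\|B(u,v)\|_{L^q_tL^p_x}\lesssim\|g\|_{L^{q/2}_t}$, which closes the estimate. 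Because $\mu=1-\tfrac1q$ matches the HLS scaling exactly, no positive power of $T$ survives, reflecting the scaling invariance of the quadratic term.

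The main obstacle is the admissibility of the Hardy--Littlewood--Sobolev step: it demands $0<\mu<1$ together with a source exponent $q/2>1$, i.e. $2<q<\infty$. I would check these against the hypotheses: $p>n/(2\alpha-1)$ gives $\tfrac{2\alpha}{q}=2\alpha-1-\tfrac{n}{p}>0$ hence $q<\infty$ and $\mu<1$, while $\mu>0$ is automatic, so the fractional integral converges; the source condition $q>2$ is the delicate one and must be extracted from $\alpha>1/2$ and the admissibility relation. The borderline value $q=2$ is genuinely excluded, since there the time kernel $(t-s)^{-1/2}$ lies only in weak-$L^2$ and the corresponding Young estimate fails logarithmically ($\mu q'=1$); this is exactly why the argument must proceed through HLS rather than Young and why the scale-invariant formulation is sharp. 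The remaining ingredients — boundedness of $P$ on $L^p$, the kernel bound for $\nabla K^\alpha_t$, and the two Hölder inequalities — are routine.
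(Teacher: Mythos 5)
Your proposal is correct in substance and follows essentially the same route as the paper's proof: absorb the divergence and the projection $P$ into the semigroup (the paper quotes the gradient decay estimate of Lemma \ref{2b} together with the $L^{p}$-boundedness of the Riesz transforms; your kernel bound $\|\nabla K^{\alpha}_{t}\|_{L^{p'}}\lesssim t^{-\frac{1}{2\alpha}-\frac{n}{2\alpha p}}$ plus Young's inequality is just a re-derivation of that lemma), then H\"older in space to get $\|u\otimes v\|_{L^{p/2}_{x}}\le\|u\|_{L^{p}_{x}}\|v\|_{L^{p}_{x}}$, the identity $\mu=\frac{1}{2\alpha}\left(1+\frac{n}{p}\right)=1-\frac{1}{q}$, and the one-dimensional Hardy--Littlewood--Sobolev inequality mapping $L^{q/2}_{t}$ into $L^{q}_{t}$.

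The one point you flagged but left open needs a correction: the condition $q>2$ cannot in fact be extracted from the stated hypotheses. It is equivalent to $\frac{n}{p}>\alpha-1$, hence automatic exactly when $\frac{1}{2}<\alpha\le 1$; but for $\alpha>1$ (which the hypotheses permit, since only $\alpha>\frac{1}{2}$ is assumed) taking $p$ large forces $q<2$, and then the HLS step has source exponent $q/2\le 1$ and fails. Note, however, that the paper's own proof applies HLS with the same source exponent $q/2$ and silently assumes the same condition, so your argument is exactly as strong as the paper's; the defect, such as it is, lies in the proposition's statement (or in an implicit restriction to $q\ge 2$) rather than in your proof.
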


  Applying Theorems \ref{a5} \& \ref{a9},   Proposition \ref{propostion} and  Lemma  \ref{2 lemma 3}, we
  obtain the global existence and uniqueness of solutions for  system (\ref{eq1dd}).

 \begin{proposition}\label{10} Let $\alpha\in(\frac{1}{2}, \frac{1}{2}+\frac{n}{4}),$ $0<T<\infty,$ $p>\frac{n}{2\alpha-1}$ and
 $\frac{n}{p}+\frac{2\alpha}{q}=2\alpha-1.$\\
(a) Assume that   $\frac{n}{2\alpha-1}< r\leq p,$  $1\leq p_{1}'
<p<\infty,$  $1\leq q_{1}' <q\leq \infty,$
$$
0<\frac{n}{2\alpha}\left(\frac{1}{q}+\frac{1}{q_{1}}\right)\left(1-\frac{1}{p}-\frac{1}{p_{1}}\right)<1,
$$
 $g\in L^{r}(\mathbb{R}^{n})$ with $\nabla\cdot g=0$ and $h\in L^{q_{1}'}_{t}([0, T]; L^{p_{1}'}_{x}(\mathbb{R}^{n})).$ If
 there exists a suitable   constant $C>0$  such that
\begin{equation}\label{assum co}
T^{1-\frac{n}{2\alpha}\left(\frac{1}{n}+\frac{1}{r}\right)}\|g\|_{L^{r}(\mathbb{R}^{n})}
 +T^{\frac{1}{q}+\frac{1}{q_{1}}-\frac{n}{2\alpha}\left(\frac{1}{p_{1}'}-\frac{1}{p}\right)}
 \|h\|_{L^{q_{1}'}_{t}([0,T];
 L^{p_{1}'}_{x}(\mathbb{R}^{n}))}\leq C,
\end{equation}
 then (\ref{eq1dd}) has a unique strong solution $v\in L^{q}_{t}([0, T]; L^{p}_{x} (\mathbb{R}^{n}))$ in the sense of
$$
v=e^{-t(-\triangle)^{\alpha}}g(x)+
\int_{0}^{t}e^{-(t-s)(-\triangle)^{\alpha}}P[h(s,x)-\nabla\cdot(v\otimes
v)(s,x)]ds,
$$
 (b) Assume that $g\in L^{\frac{n}{2\alpha-1}}(\mathbb{R}^{n})$ with $\nabla\cdot g=0$ and $h\in L^{q_{1}'}_{t}([0,\infty);
 L^{p_{1}'}_{x}(\mathbb{R}^{n}))$ with $q_{1}'$ and $p_{1}'$ satisfying $1< q_{1}' <q<\infty,$
$$
1\leq p_{1}' <p<\left \{\begin{array}{ll}
\frac{n^{2}}{(n-2\alpha)(2\alpha-1)}, & 2\alpha<n, \\
\infty, & 2\alpha\geq n, \end{array} \right. \ \hbox{and}\
\frac{n}{p'_{1}}+\frac{2\alpha}{q'_{1}}=4\alpha-1.
$$
 If  $\|g\|_{L^{\frac{n}{2\alpha-1}}(\mathbb{R}^{n})}+\|h\|_{L^{q_{1}'}_{t}([0,\infty); L^{p_{1}'}_{x}(\mathbb{R}^{n}))} $
 is small enough, then (\ref{eq1dd}) has a unique strong  solution $v\in L^{q}_{t}([0, \infty);L^{p}_{x} (\mathbb{R}^{n})).$
\end{proposition}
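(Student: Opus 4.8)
The plan is to realize the mild solution as the fixed point of the affine--quadratic map
$$\Phi(v)=v_{0}+B(v,v),\qquad v_{0}(t,x)=e^{-t(-\triangle)^{\alpha}}g(x)+\int_{0}^{t}e^{-(t-s)(-\triangle)^{\alpha}}Ph(s,x)\,ds,$$
where $B(u,v)=-\int_{0}^{t}e^{-(t-s)(-\triangle)^{\alpha}}P\nabla\cdot(u\otimes v)\,ds$, working in the Banach space $X=L^{q}_{t}(I;L^{p}_{x})$ with $I=[0,T]$ in part (a) and $I=[0,\infty)$ in part (b). Since $1<p<\infty$, the Helmholtz--Weyl projection $P$ is bounded on $L^{p}_{x}$, so each of the three pieces lands in $X$; the pressure and the divergence-free constraint are absorbed into $P$, and any fixed point $v=\Phi(v)$ is exactly the integral formulation in the statement.

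First I would bound the linear datum $v_{0}$. For the semigroup term I use the $L^{r}$--$L^{p}$ decay estimate $\|e^{-t(-\triangle)^{\alpha}}g\|_{L^{p}_{x}}\lesssim t^{-\frac{n}{2\alpha}(\frac{1}{r}-\frac{1}{p})}\|g\|_{L^{r}}$ and integrate in time; the assumption $r>\frac{n}{2\alpha-1}$ together with the scaling relation $\frac{n}{p}+\frac{2\alpha}{q}=2\alpha-1$ makes the exponent of $t$ strictly larger than $-1$, so the time integral converges and produces precisely the factor $T^{1-\frac{n}{2\alpha}(\frac{1}{n}+\frac{1}{r})}\|g\|_{L^{r}}$ in (\ref{assum co}). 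For the forcing term I apply Theorem \ref{a9} with $\frac{1}{r}=\frac{1}{p}+\frac{1}{p_{1}}$ and $\frac{1}{h}=\frac{1}{q}+\frac{1}{q_{1}}$; the identity $\frac{1}{p_{1}'}-\frac{1}{p}=1-\frac{1}{r}$ shows its gain equals $T^{\frac{1}{q}+\frac{1}{q_{1}}-\frac{n}{2\alpha}(\frac{1}{p_{1}'}-\frac{1}{p})}\|h\|_{L^{q_{1}'}_{t}L^{p_{1}'}_{x}}$, the second term of (\ref{assum co}). Hence $\|v_{0}\|_{X}\lesssim C$.

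Next I would bound the quadratic term by Proposition \ref{propostion}, whose hypotheses $p>\max\{\frac{n}{2\alpha-1},2\}$ and $2\alpha-1=\frac{2\alpha}{q}+\frac{n}{p}$ are exactly the admissibility imposed in (a); this yields $\|B(u,v)\|_{X}\lesssim\|u\|_{X}\|v\|_{X}$. With the linear and bilinear bounds in hand I close the argument by the abstract fixed-point lemma for $v=v_{0}+B(v,v)$ (Lemma \ref{2 lemma 3}): once $\|v_{0}\|_{X}$ lies below the threshold set by the bilinear norm of $B$, the map $\Phi$ sends a ball of $X$ into itself, is a contraction there, and has a unique fixed point with $\|v\|_{X}\lesssim\|v_{0}\|_{X}$. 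Taking the constant $C$ in (\ref{assum co}) small enough forces $\|v_{0}\|_{X}$ under that threshold, proving (a).

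Part (b) is the scaling-critical endpoint $T=\infty$, where both $T$-exponents above vanish: with $r=\frac{n}{2\alpha-1}$ one gets $1-\frac{n}{2\alpha}(\frac{1}{n}+\frac{1}{r})=0$, and combining $\frac{n}{p_{1}'}+\frac{2\alpha}{q_{1}'}=4\alpha-1$ with $\frac{n}{p}+\frac{2\alpha}{q}=2\alpha-1$ forces $\frac{1}{q}+\frac{1}{q_{1}}-\frac{n}{2\alpha}(\frac{1}{p_{1}'}-\frac{1}{p})=0$. Thus there is no time factor to exploit and the smallness must come from the data. The forcing term is then controlled $T$-independently by Theorem \ref{a5}, whose hypothesis (\ref{eq1j}) is exactly the vanishing of that exponent. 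The main obstacle is the homogeneous term: the decay estimate of part (a) now gives the divergent integral $\int_{0}^{\infty}t^{-1}\,dt$ and is useless, while Proposition \ref{a2} only handles $L^{2}$ data. One must instead establish the genuine critical homogeneous bound $\|e^{-t(-\triangle)^{\alpha}}g\|_{L^{q}_{t}L^{p}_{x}}\lesssim\|g\|_{L^{n/(2\alpha-1)}}$ at the admissible endpoint, obtained through the same $TT^{*}$ and Hardy--Littlewood--Sobolev machinery underlying Theorem \ref{a5} rather than through pointwise decay. Granting this, $\|v_{0}\|_{X}\lesssim\|g\|_{L^{n/(2\alpha-1)}}+\|h\|_{L^{q_{1}'}_{t}L^{p_{1}'}_{x}}$, and the same bilinear fixed-point lemma applied to this small datum yields the unique global solution in $L^{q}_{t}([0,\infty);L^{p}_{x})$.
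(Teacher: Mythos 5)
Your proposal is correct and is essentially the paper's own argument: a bilinear fixed point in $L^{q}_{t}L^{p}_{x}$ using Proposition \ref{propostion} for the quadratic term, with the linear datum bounded by Lemma \ref{2 lemma 3} and Theorem \ref{a9} in part (a), and by Theorem \ref{a5} together with the critical homogeneous Strichartz estimate in part (b), followed by the Banach contraction mapping principle. The only repairs are citation-level: the critical bound $\|e^{-t(-\triangle)^{\alpha}}g\|_{L^{q}_{t}L^{p}_{x}}\lesssim\|g\|_{L^{n/(2\alpha-1)}}$ that you say ``must be established'' is exactly the paper's quoted Lemma \ref{2c} (whose restriction $p<\frac{n^{2}}{(n-2\alpha)(2\alpha-1)}$ for $n>2\alpha$ is precisely the hypothesis in (b), so nothing needs proving --- and note $TT^{*}$ would not apply anyway, being an $L^{2}$ technique, while $r=\frac{n}{2\alpha-1}\neq 2$ in general), and the ``abstract fixed-point lemma'' you cite as Lemma \ref{2 lemma 3} is in fact the semigroup decay estimate, not a fixed-point statement.
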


 We show that the solution established in  proposition \ref{10} is smooth in spatial variables.  For a non-negative multi-index
$k=(k_{1},\cdots,k_{n})$ we define
$$
 D^{k}=\left(\frac{\partial}{\partial_{x_{1}}}\right)^{k_{1}}\cdots\left(\frac{\partial}{\partial_{x_{n}}}\right)^{k_{n}}
$$
 and $|k|=k_{1}+\cdots+k_{n}.$

\begin{corollary}\label{proposition}
 Under the hypothesis  of Corollary \ref{10} we  assume further  that for a
 non-negative multi-index  $k$
 $$
  D^{k}g\in L^{r}\ \hbox{and}\ D^{k}h\in L^{q_{1}'}([0,T];L^{p_{1}'}).
 $$
 Then  the solution $v$ established in Corollary \ref{10} satisfies
 \begin{equation}
 D^{j}v\in L^{q}([0,T];L^{p}),
 \end{equation}
 for any non-negative multi-index $j$ with $|j|\leq |k|.$
\end{corollary}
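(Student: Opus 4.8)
The plan is to propagate spatial regularity order by order, exploiting the smallness of the base solution to invert the linearization of the bilinear term. Write the mild solution furnished by Proposition~\ref{10} as $v=v_{0}-B(v,v)$, where
$$v_{0}=e^{-t(-\triangle)^{\alpha}}g+\int_{0}^{t}e^{-(t-s)(-\triangle)^{\alpha}}Ph\,ds,\qquad B(u,w)=\int_{0}^{t}e^{-(t-s)(-\triangle)^{\alpha}}P\,\nabla\cdot(u\otimes w)\,ds.$$
Because $e^{-t(-\triangle)^{\alpha}}$, $P$, and $\nabla\cdot$ are all Fourier multipliers, every spatial derivative $D^{j}$ commutes with them, so applying $D^{j}$ to the mild equation and invoking the Leibniz rule for $D^{j}(u\otimes w)$ yields
$$D^{j}v=D^{j}v_{0}-\sum_{j'\leq j}\binom{j}{j'}B(D^{j'}v,\,D^{j-j'}v).$$
I would argue by induction on $m=|j|$, the case $m=0$ being exactly Proposition~\ref{10}.

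For the inductive step, fix $j$ with $|j|=m\leq|k|$ and peel off the two top-order contributions $j'=0$ and $j'=j$, which are $B(v,D^{j}v)$ and $B(D^{j}v,v)$, rewriting the identity as
$$(I+L)\,D^{j}v=\Phi_{j},\qquad Lw:=B(w,v)+B(v,w),$$
where $\Phi_{j}=D^{j}v_{0}-\sum_{\substack{j'\le j\\ j'\ne 0,\ j'\ne j}}\binom{j}{j'}B(D^{j'}v,D^{j-j'}v)$. The right-hand side lies in $L^{q}([0,T];L^{p})$: the linear term $D^{j}v_{0}=e^{-t(-\triangle)^{\alpha}}D^{j}g+\int_{0}^{t}e^{-(t-s)(-\triangle)^{\alpha}}P\,D^{j}h\,ds$ is controlled by the very estimates that bound $v_{0}$ in Proposition~\ref{10} (Theorem~\ref{a9} for the forcing part and the homogeneous bound for the initial datum, together with the $L^{p}$-boundedness of $P$ for $1<p<\infty$, all of which commute with $D^{j}$), the hypotheses giving $D^{j}g\in L^{r}$ and $D^{j}h\in L^{q_{1}'}_{t}(L^{p_{1}'}_{x})$ for every $|j|\leq|k|$; while each remaining summand has $1\le|j'|\le m-1$ and $1\le|j-j'|\le m-1$, so by the induction hypothesis $D^{j'}v,D^{j-j'}v\in L^{q}_{t}(L^{p}_{x})$ and Proposition~\ref{propostion} bounds $B(D^{j'}v,D^{j-j'}v)$.

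It then remains to invert $I+L$ on $L^{q}([0,T];L^{p})$. By Proposition~\ref{propostion},
$$\|Lw\|_{L^{q}_{t}(L^{p}_{x})}\leq\|B(w,v)\|_{L^{q}_{t}(L^{p}_{x})}+\|B(v,w)\|_{L^{q}_{t}(L^{p}_{x})}\lesssim\|v\|_{L^{q}_{t}(L^{p}_{x})}\,\|w\|_{L^{q}_{t}(L^{p}_{x})},$$
and since the fixed point produced in Proposition~\ref{10} lies in a ball whose radius is forced to be small by the contraction condition (the smallness~\eqref{assum co} in part (a), or the small-data assumption in part (b)), the factor $\|v\|_{L^{q}_{t}(L^{p}_{x})}$ can be taken below the reciprocal of the bilinear constant. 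Hence $\|L\|<1$, the Neumann series $(I+L)^{-1}=\sum_{\ell\geq0}(-L)^{\ell}$ converges in operator norm, and $D^{j}v=(I+L)^{-1}\Phi_{j}\in L^{q}([0,T];L^{p})$, which closes the induction at order $|k|$.

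The main obstacle is the a priori justification that the distributional derivative $D^{j}v$ already belongs to $L^{q}([0,T];L^{p})$, which is what legitimizes writing $(I+L)D^{j}v=\Phi_{j}$ and inverting. I would resolve this by running the whole induction on the Picard iterates $v_{(0)}=v_{0}$, $v_{(n+1)}=v_{0}-B(v_{(n)},v_{(n)})$ used to construct $v$: each $v_{(n)}$ is regular enough for $D^{j}v_{(n)}$ to make sense in $L^{q}_{t}(L^{p}_{x})$, the estimates above are uniform in $n$ because $\|v_{(n)}\|_{L^{q}_{t}(L^{p}_{x})}$ stays in the same small ball (so $\|L_{(n)}\|<1$ uniformly), and $v_{(n)}\to v$ in $L^{q}([0,T];L^{p})$ forces $D^{j}v_{(n)}\to D^{j}v$ in the sense of distributions. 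The uniform bound then transfers to the limit, giving $D^{j}v\in L^{q}([0,T];L^{p})$ for every $j$ with $|j|\leq|k|$.
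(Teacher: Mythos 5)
Your proof is correct, but it takes a visibly different route from the paper's. The paper also inducts on the order of differentiation (it writes out only the case $|j|=1$) and differentiates the Duhamel formula, but instead of linearizing around $v$ and inverting $I+L$, it re-runs the Banach fixed-point argument of Proposition \ref{10} in the derivative-augmented space $X=\{w:\ w\in L^{q}([0,T];L^{p}),\ Dw\in L^{q}([0,T];L^{p})\}$ with norm $\|w\|_{L^{q}L^{p}}+\|Dw\|_{L^{q}L^{p}}$: the map obtained by differentiating (\ref{operator}), namely $\overline{T}(Dv)=e^{-t(-\triangle)^{\alpha}}(Dg)+\int_{0}^{t}e^{-(t-s)(-\triangle)^{\alpha}}P(Dh)-B(Dv,v)-B(v,Dv)$, is a contraction on $X$ by exactly the ingredients you invoke (Proposition \ref{propostion} for the bilinear terms, Theorem \ref{a9} and Lemma \ref{2 lemma 3} for the linear ones, plus the same smallness), and its fixed point, being in particular a fixed point of the original map in the larger space $L^{q}([0,T];L^{p})$, coincides with $v$ by uniqueness there; hence $Dv\in L^{q}([0,T];L^{p})$. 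The comparison is instructive: the paper's formulation never confronts the question of whether the distributional derivative $D^{j}v$ of the already-constructed solution lies a priori in $L^{q}L^{p}$ --- the regularity is built into the space where the new fixed point lives, and (an unstated but standard appeal to) uniqueness does the identification --- whereas that question is precisely what forced you to abandon the clean identity $(I+L)D^{j}v=\Phi_{j}$ and retreat to uniform bounds on Picard iterates. Your patched argument is rigorous, and it has the merit of exposing the structural reason regularity propagates: smallness of $\|v\|_{L^{q}L^{p}}$ makes the linearization $I+L$ invertible. Note, though, that once you run the induction on the iterates $v_{(n)}$ you no longer need the Neumann series at all: the recursion gives $\|D^{j}v_{(n+1)}\|\leq M+\theta\|D^{j}v_{(n)}\|$ with $\theta=2C\sup_{n}\|v_{(n)}\|_{L^{q}L^{p}}<1$, and weak compactness (legitimate since $1<p,q<\infty$ here) passes the bound to the limit; at that point your proof and the paper's are the same mechanism in different clothes, the paper's being the streamlined version and yours the one that makes the approximation step explicit.
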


  The rest of this paper is organized as follows. In the next section,  we  give some basic lemmas: Lemma \ref{2a} states
  that $e^{-t(-\triangle)^{\alpha}}$  commutes with fractional  derivatives and  is self-adjoint as an operator on
  $L^{2}(\mathbb{R}^{n});$ Lemmas \ref{2b}-\ref{2c} provide us the $L^{p}-$decay estimates and non-endpoint strichartz
  estimates  of  the fractional  heat equation  established by Miao-Yuan-Zhang  in \cite{C. Miao};  Lemma \ref{2 lemma 3}
  gives another mixed norm estimate of  $e^{-t(-\triangle)^{\alpha}}f;$ Lemma \ref{2d} is the well known abstract
  Strichartz estimates of Keel-Tao \cite{M.Keel T.Tao}. In the third section,  we prove the main results of this paper:
  Proposition  \ref{a2} is derived  from the abstract Strichartz estimates. Theorem \ref{a5} is  proved   by the
  Hardy-Litllewood-Sobolev inequality and Lemmas \ref{2a} \& \ref{2b}.  Theorem  \ref{a4} is verified by the
  Littlewood-Paley decomposition. (a) of  Theorem \ref{111111} is derived from Lemma \ref{2b}  and the proof of (b) is
  essentially  the same as the proof of  Tao's \cite[Lemma 2.5]{T. Tao 1}. Theorem \ref{a8} is demonstrated according to
  the imbedding of $\dot{H}^{\alpha, 2}(\mathbb{R}^{n})$ into $L^{\frac{2n}{n-2\alpha}}(\mathbb{R}^{n})$ when
  $\alpha\in (0,\frac{n}{2})$ and the Hardy-Littlewood-Sobolev  inequality. Corollary \ref{a7} is showed by Proposition
  \ref{a2}  and Theorem  \ref{a5} via the definition  of Besov spaces.  Corollary \ref{88} is proved  form  Proposition
  \ref{a2},  Theorem \ref{a5} and the Banach contraction mapping principle.  Theorem  \ref{a9} is showed by using  the
  Young inequality and estimating $K_{t}^{\alpha}(x)$ in  mixed norm spaces. Proposition \ref{propostion} is proved via
  Lemma \ref{2b} and  the Hardy-Littlewood-Sobolev inequality.   Proposition \ref{10} is established by applying Proposition
  \ref{propostion},  Lemma  \ref{2 lemma 3}, the Banach contraction mapping principle and our main results. Corollary
  \ref{proposition} is verified by induction and the Banach contraction mapping principle.

\vspace{0.1in}
{\section{Lemmas}}

 This section contains five results needed for proving  the main results of this paper. The first one states that
 $e^{-t(-\triangle)^{\alpha}}$  commutes with $(-\triangle)^{\beta}$ and $(I-\triangle)^{\beta},$ and it is a self-adjoint
 bounded linear operator on $L^{2}(\mathbb{R}^{n}).$

\begin{lemma}\label{2a} For all $t>0$ and $\beta,\alpha>0,$ we have\\
(a) $e^{-t(-\triangle)^{\alpha}}(-\triangle)^{\beta}=(-\triangle)^{\beta}e^{-t(-\triangle)^{\alpha}}.$\\
(b) $e^{-t(-\triangle)^{\alpha}}(I-\triangle)^{\beta}=(I-\triangle)^{\beta}e^{-t(-\triangle)^{\alpha}}.$\\
(c) $\langle e^{-t(-\triangle)^{\alpha}}f, g\rangle =\langle f,
e^{-t(-\triangle)^{\alpha}}f\rangle, \forall f, g\in
L^{2}(\mathbb{R}^{n}).$
\end{lemma}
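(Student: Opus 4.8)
The plan is to transfer every operator to the Fourier side, where each one acts as multiplication by a scalar symbol; the three assertions then reduce to the commutativity of pointwise multiplication and the reality of the heat symbol. Concretely, I would first record the symbols: by definition $e^{-t(-\triangle)^{\alpha}}$ is the Fourier multiplier with symbol $m_t(\xi)=e^{-t|\xi|^{2\alpha}}$, while $(-\triangle)^{\beta}$ and $(I-\triangle)^{\beta}$ are the multipliers with symbols $|\xi|^{2\beta}$ and $(1+|\xi|^{2})^{\beta}$ respectively.

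For part (a), on the Fourier side both orderings $e^{-t(-\triangle)^{\alpha}}(-\triangle)^{\beta}$ and $(-\triangle)^{\beta}e^{-t(-\triangle)^{\alpha}}$ act as multiplication by the same function $e^{-t|\xi|^{2\alpha}}|\xi|^{2\beta}$, since scalar multiplication is commutative; applying $\mathcal{F}^{-1}$ returns the claimed operator identity. Part (b) is verbatim the same with $|\xi|^{2\beta}$ replaced by $(1+|\xi|^{2})^{\beta}$. For part (c), I would invoke Plancherel's theorem to write $\langle e^{-t(-\triangle)^{\alpha}}f,g\rangle=\int_{\mathbb{R}^{n}}e^{-t|\xi|^{2\alpha}}\widehat{f}(\xi)\,\overline{\widehat{g}(\xi)}\,d\xi$; because $e^{-t|\xi|^{2\alpha}}$ is real-valued for $t>0$, it may be moved under the complex conjugation to give $\int_{\mathbb{R}^{n}}\widehat{f}(\xi)\,\overline{e^{-t|\xi|^{2\alpha}}\widehat{g}(\xi)}\,d\xi=\langle f,e^{-t(-\triangle)^{\alpha}}g\rangle$, which is the asserted self-adjointness (the right-hand side of the displayed identity should read $\langle f,e^{-t(-\triangle)^{\alpha}}g\rangle$). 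The same Plancherel computation together with the bound $0<e^{-t|\xi|^{2\alpha}}\le 1$ simultaneously shows that $e^{-t(-\triangle)^{\alpha}}$ is a bounded linear operator on $L^{2}(\mathbb{R}^{n})$ with operator norm at most one.

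The symbol manipulations themselves are immediate, so the only genuinely delicate point is the functional-analytic bookkeeping in (a) and (b): the fractional operators $(-\triangle)^{\beta}$ and $(I-\triangle)^{\beta}$ are unbounded and hence only densely defined, so the stated commutation must be given a precise domain interpretation. I would handle this by first proving the identities on a convenient core, namely the Schwartz class $\mathscr{S}$ (or $\mathscr{S}_{0}$), where all four symbols act as honest multipliers and where the smoothing factor $e^{-t|\xi|^{2\alpha}}$ guarantees that the outputs remain Schwartz. One then observes that $e^{-t(-\triangle)^{\alpha}}$ maps the domain of the fractional operator into itself, so the identity extends from the core to the full natural domain by a routine density argument. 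This domain extension is the main obstacle, and indeed essentially the only one; everything else follows at once from the multiplier description.
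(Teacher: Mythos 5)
Your proposal is correct and follows essentially the same route as the paper: parts (a) and (b) from the Fourier multiplier description, and part (c) via Plancherel's identity, moving the real symbol $e^{-t|\xi|^{2\alpha}}$ under the complex conjugation (you also rightly note the typo in the statement, whose right-hand side should be $\langle f, e^{-t(-\triangle)^{\alpha}}g\rangle$, exactly as the paper's own computation concludes). Your additional care about domains of the unbounded operators $(-\triangle)^{\beta}$ and $(I-\triangle)^{\beta}$ goes beyond the paper, which simply asserts (a) and (b) follow from the definitions, but the underlying argument is the same.
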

\begin{proof}  The proofs of (a) and (b) will  follow form  the definition of $e^{-t(-\triangle)^{\alpha}},$
$(-\triangle)^{\beta}$ and $(I-\triangle)^{\beta}.$ For (b),  let $f, g\in L^{2}(\mathbb{R}^{n}).$ According
 to the  Fourier transform and the Plancherel's identity we have
\begin{eqnarray}
\langle e^{-t(-\triangle)^{\alpha}}f, g\rangle
&=&\int (e^{-t(-\triangle)^{\alpha}}f) \overline{g(x)}dx\nonumber\\
&=&\int\mathcal {F}^{-1}\left(e^{-t|\xi|^{2\alpha}}\mathcal {F}f(\xi)\right)(x)\overline{g(x)}dx
\nonumber\\
&=&\int e^{-t|\xi|^{2\alpha}}\mathcal {F}f(\xi)\overline{\mathcal{F}g(\xi)}d\xi
\nonumber\\
&=&\int \mathcal {F}f(\xi)\overline{e^{-t|\xi|^{2\alpha}}\mathcal{F}g(\xi)}d\xi
\nonumber\\
&=&\int f(x)\overline{\mathcal{F}^{-1}(e^{-t|\xi|^{2\alpha}}\mathcal{F}g(\xi))(x)}dx\nonumber\\
&=&\langle f, e^{-t(-\triangle)^{\alpha}}g\rangle.\nonumber
\end{eqnarray}
 This finishes the proof of Lemma \ref{2a}.
 \end{proof}
  Miao-Yuan-Zhang in \cite{C. Miao} established the forthcoming two lemmas.
\begin{lemma}\label{2b} \cite{C. Miao}
  Let $1\leq r\leq p\leq \infty$ and $f\in L^{r}(\mathbb{R}^{n}).$ Then
 $$
 \|e^{-t(-\triangle)^{\alpha}}f\|_{L^{p}_{x}}\lesssim t^{-\frac{n}{2\alpha}\left(\frac{1}{r}-\frac{1}{p}\right)}\|f\|_{L^{r}},
 $$
$$
 \|\nabla e^{-t(-\triangle)^{\alpha}}f\|_{L^{p}_{x}}\lesssim t^{-\frac{1}{2\alpha}-\frac{n}{2\alpha}\left(\frac{1}{r}-\frac{1}{p}\right)}\|f\|_{L^{r}}.
 $$
\end{lemma}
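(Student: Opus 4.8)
The plan is to reduce both inequalities to integrability properties of the convolution kernel. Writing $e^{-t(-\triangle)^{\alpha}}f=K_{t}^{\alpha}\ast f$ with $\widehat{K_{t}^{\alpha}}(\xi)=e^{-t|\xi|^{2\alpha}}$, the dilation rule for the Fourier transform (substitute $\eta=t^{1/(2\alpha)}\xi$ in $\mathcal{F}^{-1}(e^{-t|\xi|^{2\alpha}})$) gives the scaling identity
$$
K_{t}^{\alpha}(x)=t^{-\frac{n}{2\alpha}}K_{1}^{\alpha}\left(t^{-\frac{1}{2\alpha}}x\right).
$$
A direct change of variables then yields, for every $1\le m\le\infty$,
$$
\|K_{t}^{\alpha}\|_{L^{m}}=t^{-\frac{n}{2\alpha}\left(1-\frac{1}{m}\right)}\|K_{1}^{\alpha}\|_{L^{m}}.
$$
First I would fix $m$ by $\frac{1}{m}=1-\left(\frac{1}{r}-\frac{1}{p}\right)$, so that Young's convolution inequality $\|K_{t}^{\alpha}\ast f\|_{L^{p}}\le\|K_{t}^{\alpha}\|_{L^{m}}\|f\|_{L^{r}}$ applies (the hypothesis $r\le p$ guarantees $m\ge1$). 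With this choice $1-\frac{1}{m}=\frac{1}{r}-\frac{1}{p}$, which is exactly the exponent in the claim.

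The heart of the argument is to show $K_{1}^{\alpha}\in L^{m}(\mathbb{R}^{n})$ for every $1\le m\le\infty$, for which it suffices to prove $K_{1}^{\alpha}\in L^{1}\cap L^{\infty}$ and interpolate. The bound $\|K_{1}^{\alpha}\|_{L^{\infty}}\lesssim\|e^{-|\xi|^{2\alpha}}\|_{L^{1}}<\infty$ is immediate, since $\int_{\mathbb{R}^{n}}e^{-|\xi|^{2\alpha}}\,d\xi=c_{n}\int_{0}^{\infty}e^{-\rho^{2\alpha}}\rho^{n-1}\,d\rho<\infty$. The $L^{1}$ bound is the delicate point, because $|\xi|^{2\alpha}$ fails to be smooth at the origin for non-integer $\alpha$ and $K_{1}^{\alpha}$ has no closed form and may change sign when $\alpha>1$. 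I would split $e^{-|\xi|^{2\alpha}}=\chi(\xi)e^{-|\xi|^{2\alpha}}+(1-\chi(\xi))e^{-|\xi|^{2\alpha}}$ with $\chi\in C_{0}^{\infty}$ equal to $1$ near the origin. The high-frequency piece is smooth and rapidly decreasing, so its inverse transform is a Schwartz function and lies in every $L^{m}$. For the low-frequency piece the singularity governing the large-$|x|$ behaviour is the leading non-smooth term $-|\xi|^{2\alpha}$ in $e^{-|\xi|^{2\alpha}}=1-|\xi|^{2\alpha}+O(|\xi|^{4\alpha})$; since $\mathcal{F}^{-1}(|\xi|^{2\alpha})$ is homogeneous of degree $-n-2\alpha$ and equals $c_{n,\alpha}|x|^{-n-2\alpha}$ away from the origin, one obtains the tail estimate $|K_{1}^{\alpha}(x)|\lesssim(1+|x|)^{-n-2\alpha}$, which is integrable because $n+2\alpha>n$. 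Hence $K_{1}^{\alpha}\in L^{1}$.

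Combining the kernel estimate with Young's inequality proves the first inequality. For the gradient estimate I would run the same scheme with the kernel $\nabla K_{t}^{\alpha}$, whose Fourier symbol is $i\xi\,e^{-t|\xi|^{2\alpha}}$; the dilation computation now gives
$$
\nabla K_{t}^{\alpha}(x)=t^{-\frac{n+1}{2\alpha}}(\nabla K_{1}^{\alpha})\left(t^{-\frac{1}{2\alpha}}x\right),
$$
so that $\|\nabla K_{t}^{\alpha}\|_{L^{m}}=t^{-\frac{1}{2\alpha}-\frac{n}{2\alpha}\left(\frac{1}{r}-\frac{1}{p}\right)}\|\nabla K_{1}^{\alpha}\|_{L^{m}}$, which accounts for the extra factor $t^{-1/(2\alpha)}$. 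The required fact $\nabla K_{1}^{\alpha}\in L^{1}\cap L^{\infty}$ follows exactly as before: the $L^{\infty}$ bound from integrability of $|\xi|e^{-|\xi|^{2\alpha}}$, and the $L^{1}$ bound from the same cutoff decomposition, the low-frequency tail now decaying like $|x|^{-n-2\alpha-1}$. The main obstacle throughout is the non-smoothness of the symbol at $\xi=0$ for fractional $\alpha$, which rules out a direct Gaussian or stationary-phase computation and forces the careful low-frequency analysis; everything else is scaling together with Young's inequality.
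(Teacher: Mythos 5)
The paper never proves this lemma at all: it is imported verbatim from Miao--Yuan--Zhang \cite{C. Miao}, and your argument --- the self-similarity $K_{t}^{\alpha}(x)=t^{-\frac{n}{2\alpha}}K_{1}^{\alpha}(t^{-\frac{1}{2\alpha}}x)$, Young's inequality with $1+\frac1p=\frac1m+\frac1r$, and $K_{1}^{\alpha},\nabla K_{1}^{\alpha}\in L^{1}\cap L^{\infty}$ via a low/high frequency splitting --- is exactly the standard proof of that reference, whose ingredients the paper itself reuses (in the proof of Theorem \ref{a9} it invokes $K_{1}^{\alpha}\in L^{k}$ for all $1\le k\le\infty$ together with the same scaling computation). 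Your one schematic step, the tail bound for the low-frequency piece, is indeed the crux, and as written a single-term expansion does not literally suffice because the remainder $O(|\xi|^{4\alpha})$ is still non-smooth at $\xi=0$; but the fix is routine (expand $e^{-|\xi|^{2\alpha}}$ to finite order $N$ with $2\alpha(N+1)>n+1$, so the remainder is $C^{n+1}$ with integrable derivatives and its inverse transform decays like $|x|^{-(n+1)}$, while each term $\chi(\xi)|\xi|^{2\alpha k}$ contributes $O(|x|^{-n-2\alpha k})$), so this is a refinement rather than a gap.
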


\begin{lemma}\label{2c} \cite{C. Miao}
 Let $(q,p,r)$ be any  $\frac{n}{2\alpha}-$admissible triplet satisfying
 $$
 p<\left \{\begin{array}{ll}
\frac{nr}{n-2\alpha}, & n>2\alpha, \\
\infty, & n\leq 2\alpha, \end{array} \right.
$$
 and let $\varphi\in L^{r}(\mathbb{R}^{n}).$ Then $ e^{-t(-\triangle)^{\alpha}}\varphi\in L^{q}(I; L^{p}(\mathbb{R}^{n}))$ with the estimate
$$
\|e^{-t(-\triangle)^{\alpha}}\varphi\|_{L^{q}_{t}(I; L^{p}_{x})}\lesssim\|\varphi\|_{r},
$$
for $I=[0, T),$ $0<T\leq\infty.$
\end{lemma}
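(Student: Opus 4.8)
The plan is to reduce the space--time estimate to a purely spatial one by exploiting the parabolic scaling together with the pointwise--in--time bound of Lemma \ref{2b}. Throughout I take $I=[0,\infty)$, the scale--invariant case; the estimate on $[0,T)$ follows by restricting the time integral. The first thing to record is that the crude route fails: Lemma \ref{2b} with the admissibility relation $\frac1q=\frac{n}{2\alpha}(\frac1r-\frac1p)$ gives $\|e^{-t(-\triangle)^{\alpha}}\varphi\|_{L^{p}_{x}}\lesssim t^{-1/q}\|\varphi\|_{L^{r}}$, and raising to the power $q$ and integrating produces the divergent $\int_{0}^{\infty}t^{-1}\,dt$. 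This divergence is exactly the signal that one must use the dyadic/orthogonality structure of $e^{-t(-\triangle)^{\alpha}}$, not its pointwise size alone.

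My main approach is to identify the left--hand side with a homogeneous Besov seminorm. With the Littlewood--Paley projections $\triangle_{j}$ introduced above, the frequency support of $\widehat{\triangle_{j}\varphi}$ lies in $D_{j}$, where the multiplier $e^{-t|\xi|^{2\alpha}}$ is comparable to $e^{-ct2^{2\alpha j}}$ with uniformly bounded multiplier norm, so that $\|e^{-t(-\triangle)^{\alpha}}\triangle_{j}\varphi\|_{L^{p}_{x}}\lesssim e^{-ct2^{2\alpha j}}\|\triangle_{j}\varphi\|_{L^{p}_{x}}$. Summing in $j$ and integrating in $t$, and using that for each fixed $t$ only the blocks with $2^{2\alpha j}\sim t^{-1}$ contribute at full strength, one obtains
\[
\Big(\int_{0}^{\infty}\|e^{-t(-\triangle)^{\alpha}}\varphi\|_{L^{p}_{x}}^{q}\,dt\Big)^{1/q}\ \approx\ \|\varphi\|_{\dot B^{-2\alpha/q}_{p,q}} .
\]
This is precisely the fractional--heat characterization of a negative--regularity Besov space (the equivalence flagged in the Remark following Theorem \ref{111111}); the plain measure $dt$ pins the exponent at $-2\alpha/q$. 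With this identification the lemma reduces to the scale--invariant embedding $L^{r}(\mathbb{R}^{n})\hookrightarrow \dot B^{-2\alpha/q}_{p,q}(\mathbb{R}^{n})$, whose homogeneity $-\tfrac{2\alpha}{q}-\tfrac{n}{p}=-\tfrac{n}{r}$ is nothing but the admissibility relation.

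It remains to verify this embedding in the stated range. Since $L^{r}=\dot F^{0}_{r,2}$ and $n(\tfrac1r-\tfrac1p)=\tfrac{2\alpha}{q}$ by admissibility, a Sobolev--type embedding for Triebel--Lizorkin/Besov spaces of Franke--Jawerth type gives $\dot F^{0}_{r,2}\hookrightarrow \dot B^{-2\alpha/q}_{p,r}$ (it uses only the positive smoothness gap $\tfrac{2\alpha}{q}>0$ and $r\le p$), and the monotonicity $\dot B^{-2\alpha/q}_{p,r}\hookrightarrow \dot B^{-2\alpha/q}_{p,q}$ in the third index finishes the chain provided $q\ge r$. The hypothesis $p<\frac{nr}{n-2\alpha}$ (for $n>2\alpha$) is exactly equivalent to $\tfrac1q=\tfrac{n}{2\alpha}(\tfrac1r-\tfrac1p)<\tfrac1r$, i.e.\ $q>r$, so this last inclusion holds with room to spare; for $n\le 2\alpha$ the argument needs only $p<\infty$. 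The borderline $q=r$ is the genuinely delicate endpoint, to be handled by the sharp endpoint versions of these estimates (cf.\ Proposition \ref{a2} and Remark \ref{a3}).

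I expect the real work to sit in the characterization of the second paragraph, and this is where I would be most careful. A naive Minkowski inequality only yields the lossy bound by $\|\varphi\|_{\dot B^{-2\alpha/q}_{p,1}}$, an $\ell^{1}$ rather than $\ell^{q}$ sum over dyadic blocks, which is too strong a norm on the right to combine with the embedding. Recovering the sharp $\ell^{q}$ summation requires exploiting that, for each $t$, the fractional--heat multiplier concentrates the dyadic sum on the single scale $2^{2\alpha j}\sim t^{-1}$; this near--orthogonality in time is the true substitute for the divergent $\int_{0}^{\infty}t^{-1}\,dt$, and it is the point at which the non--endpoint condition is used. Should the Besov machinery be deemed too heavy, an alternative is to treat the Hilbert case $r=2$ by the $TT^{*}$ method: self--adjointness (Lemma \ref{2a}) turns $TT^{*}$ into $G\mapsto\int_{0}^{\infty}e^{-(t+s)(-\triangle)^{\alpha}}G(s)\,ds$, Lemma \ref{2b} bounds its kernel by $(t+s)^{-2/q}\le|t-s|^{-2/q}$, and the Hardy--Littlewood--Sobolev inequality closes the estimate precisely when $2/q<1$; one would then obtain general $r$ by interpolating this endpoint against the fixed--time decay bound of Lemma \ref{2b}.
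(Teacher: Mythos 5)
The paper itself contains no proof of Lemma \ref{2c}: it is quoted directly from Miao--Yuan--Zhang \cite{C. Miao}, so your argument can only be compared against the standard proof of such semigroup Strichartz estimates. That standard route is more elementary than yours: the decay estimate of Lemma \ref{2b} gives $\|e^{-t(-\triangle)^{\alpha}}\varphi\|_{L^{p}_{x}}\lesssim t^{-1/q}\|\varphi\|_{L^{r}}$, i.e.\ a weak-type bound $L^{r}\to L^{q,\infty}_{t}L^{p}_{x}$; running this with $p$ fixed for nearby exponents $r_{0}<r<r_{1}$ and applying real (Marcinkiewicz) interpolation in the time variable upgrades it to $L^{r}\to L^{q,r}_{t}L^{p}_{x}$, and the Lorentz inclusion $L^{q,r}\hookrightarrow L^{q}$ holds precisely when $r\le q$, i.e.\ exactly under the non-endpoint hypothesis as you computed. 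Your route is genuinely different and, in outline, correct: you identify the left-hand side with $\|\varphi\|_{\dot B^{-2\alpha/q}_{p,q}}$ via the caloric characterization of negative-order Besov spaces --- this is Proposition 2.1 of \cite{C. Miao}, the very result invoked in the remark after Theorem \ref{111111}, and it is independent of the lemma being proved, so citing it is not circular --- and then conclude from the chain $L^{r}=\dot F^{0}_{r,2}\hookrightarrow\dot B^{-2\alpha/q}_{p,r}\hookrightarrow\dot B^{-2\alpha/q}_{p,q}$ (Jawerth--Franke, then third-index monotonicity). Your argument even yields the formally stronger statement with third index $r$ in place of $q$; what the interpolation proof buys instead is self-containedness, needing no Littlewood--Paley or Besov machinery.

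Two inaccuracies are worth fixing, neither fatal. First, the step you defer as ``the real work'' --- recovering the sharp $\ell^{q}$ summation in the characterization --- is routine and, contrary to your closing paragraph, uses no non-endpoint condition at all: setting $b_{j}=2^{-2\alpha j/q}\|\triangle_{j}\varphi\|_{L^{p}}$, splitting time dyadically $t\sim 2^{-2\alpha k}$, and using $\|e^{-t(-\triangle)^{\alpha}}\triangle_{j}\varphi\|_{L^{p}}\lesssim e^{-ct2^{2\alpha j}}\|\triangle_{j}\varphi\|_{L^{p}}$ reduces the claim to $\|K\ast b\|_{\ell^{q}(\mathbb{Z})}\lesssim\|b\|_{\ell^{q}(\mathbb{Z})}$ for the kernel $K(m)=e^{-c2^{2\alpha m}}2^{2\alpha m/q}$, which belongs to $\ell^{1}(\mathbb{Z})$ for every $q<\infty$ (super-exponential decay for $m\ge 0$, geometric decay for $m<0$); discrete Young's inequality finishes. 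So the hypothesis $q>r$ enters only through the embedding $\dot B^{-2\alpha/q}_{p,r}\hookrightarrow\dot B^{-2\alpha/q}_{p,q}$, where you first placed it. Second, your $TT^{*}$ alternative undersells the work needed for general $r$: interpolating the $r=2$ estimate against the trivial bound $L^{p}\to L^{\infty}_{t}L^{p}_{x}$ with $p$ held fixed requires the auxiliary triplet $(q_{2},p,2)$ to satisfy $q_{2}>2$, i.e.\ $p<\frac{2n}{n-2\alpha}$, so the range $\frac{2n}{n-2\alpha}\le p<\frac{nr}{n-2\alpha}$ is missed unless both interpolation endpoints are allowed to vary. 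Finally, the case $q=\infty$ (equivalently $r=p$), which the hypotheses permit, sits outside your Besov identification but is immediate from Lemma \ref{2b}.
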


We can obtain the following estimate  from Lemma \ref{2b}.

\begin{lemma}\label{2 lemma 3} Let $1/2<\alpha, T>0,$ and $p,q$ satisfy
$$
p>\frac{n}{2\alpha-1},\ \  2\alpha-1=\frac{2\alpha}{q}+\frac{n}{p}.
$$
 Assume that $f\in L^{r}(\mathbb{R}^{n})$ with $\frac{n}{2\alpha-1}<r\leq p.$ Then we have
$$
 \|e^{-t(-\triangle)^{\alpha}}f\|_{L^{q}([0,T]; L^{p})}\lesssim T^{1-\frac{n}{2\alpha}\left(\frac{1}{n}+\frac{1}{r}\right)}\|f\|_{L^{r}}.
$$
\end{lemma}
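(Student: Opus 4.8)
The plan is to derive everything from the pointwise-in-time $L^p$ decay estimate of Lemma \ref{2b} and then simply integrate in $t$. Since the hypotheses give $\frac{n}{2\alpha-1} < r \le p$, in particular $1 \le r \le p \le \infty$, Lemma \ref{2b} applies and furnishes, for each fixed $t>0$, the bound
$$
\|e^{-t(-\triangle)^{\alpha}}f\|_{L^{p}_{x}} \lesssim t^{-a}\|f\|_{L^{r}}, \qquad a := \frac{n}{2\alpha}\left(\frac{1}{r}-\frac{1}{p}\right).
$$
Taking the $L^{q}$-norm in time over $[0,T]$ of this pointwise estimate then yields
$$
\|e^{-t(-\triangle)^{\alpha}}f\|_{L^{q}([0,T];L^{p})} \lesssim \|f\|_{L^{r}}\left(\int_{0}^{T} t^{-aq}\,dt\right)^{1/q}.
$$
Thus the whole statement reduces to two elementary facts: that the time integral converges, and that the resulting power of $T$ equals the claimed exponent.

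First I would check convergence of $\int_{0}^{T} t^{-aq}\,dt$ at the origin, which requires $aq<1$. Writing $aq = q\frac{n}{2\alpha}\bigl(\frac{1}{r}-\frac{1}{p}\bigr)$ and substituting the scaling relation $2\alpha-1 = \frac{2\alpha}{q}+\frac{n}{p}$ in the form $\frac{1}{q} = 1 - \frac{1}{2\alpha} - \frac{n}{2\alpha p}$, the inequality $aq<1$ simplifies — after the $\frac{n}{2\alpha p}$ contributions cancel — to $\frac{n}{r} < 2\alpha-1$, that is, exactly the hypothesis $r > \frac{n}{2\alpha-1}$. This is the only place the lower bound on $r$ is used, and it is precisely the integrability threshold at $t=0$.

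Granting $aq<1$, the integral equals $\frac{T^{1-aq}}{1-aq}$, so the time factor is a constant multiple of $T^{(1-aq)/q} = T^{\frac{1}{q}-a}$. It then remains to identify $\frac{1}{q}-a$ with the target exponent $1 - \frac{n}{2\alpha}\bigl(\frac{1}{n}+\frac{1}{r}\bigr) = 1 - \frac{1}{2\alpha} - \frac{n}{2\alpha r}$. Using the scaling relation in the form $\frac{1}{q}+\frac{n}{2\alpha p} = 1-\frac{1}{2\alpha}$, I compute
$$
\frac{1}{q}-a = \left(\frac{1}{q}+\frac{n}{2\alpha p}\right) - \frac{n}{2\alpha r} = 1 - \frac{1}{2\alpha} - \frac{n}{2\alpha r},
$$
which matches exactly and completes the reduction.

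There is no genuine obstacle here: the argument is just Lemma \ref{2b} followed by integration in time. The only point demanding care is the exponent bookkeeping, and specifically the observation that the hypothesis $r > \frac{n}{2\alpha-1}$ coincides with the convergence condition $aq<1$ while the scaling identity collapses $\frac{1}{q}-a$ to the stated power of $T$. I would separately note that the degenerate cases $r=p$ (where $a=0$ and the time integral is simply $T$) and $p=\infty$ are covered by the very same estimate, so no special treatment is needed.
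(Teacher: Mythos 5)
Your proposal is correct, and it is exactly the argument the paper intends: the paper gives no written proof of this lemma beyond the remark that it follows from Lemma \ref{2b}, and your route---apply the pointwise-in-time decay estimate of Lemma \ref{2b}, integrate the power $t^{-aq}$ over $[0,T]$ using $r>\frac{n}{2\alpha-1}$ to guarantee $aq<1$, and identify $T^{\frac{1}{q}-a}$ with $T^{1-\frac{n}{2\alpha}\left(\frac{1}{n}+\frac{1}{r}\right)}$ via the scaling relation---is precisely that argument, with the exponent bookkeeping done correctly.
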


 Lemma \ref{2c} gives us the homogeneous  Strichartz  estimates of  equation (\ref{eq1d}) except  endpoint cases.
 To obtain the endpoint estimates we need the abstract Strichartz estimates of Keel-Tao \cite{M.Keel T.Tao}.

\begin{lemma}\label{2d} \cite{M.Keel T.Tao} Let $H$ be a Hilbert space and $X$ be a Banach space. Suppose that
 $U(t): H\longrightarrow L^{2}(X)$  obeys the energy estimate:
$$
\|U(t)f\|_{L^{2}(X)}\lesssim\|f\|_{H}
$$
 and the untruncated decay estimate, that is for some $\sigma>0,$
$$
\|U(t)(U(s))^{*}f\|_{L^{\infty}}\lesssim |t-s|^{-\sigma}\|f\|_{L^{1}}, \ \forall s\neq t.
$$
 Then the estimates
$$
\|U(t)f\|_{L^{q}_{t}L^{p}_{x}}\lesssim\|f\|_{H},
$$
$$
\left\|\int(U(s))^{*}F(s)ds\right\|_{H}\lesssim \|F\|_{L^{q'}_{t}L^{p'}_{x}}
$$
$$
\left\|\int_{s<t}U(t)(U(s))^{*}F(s)ds\right\|_{L^{q}_{t}L^{p}_{x}}\lesssim\|F\|_{L^{q_{1}'}_{t}L^{p_{1}'}_{x}}
$$
 hold for all $\sigma-$admissible triplets  $(q,p,2)$ and  $(q_{1},p_{1},2)$ with $q,q_{1}\geq 2,$
 $(q,p,\sigma)$ and $(q_{1},p_{1},\sigma)$ are not $ (2,\infty, 1).$
\end{lemma}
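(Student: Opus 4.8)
The plan is to follow the abstract $TT^{*}$ argument of Keel-Tao \cite{M.Keel T.Tao}, of which I shall only indicate the structure since the statement is quoted from their work. Write $Tf(t)=U(t)f$ and let $T^{*}$ be its adjoint, so that $TT^{*}F=\int U(t)(U(s))^{*}F(s)\,ds$. By the standard duality underlying the $TT^{*}$ method, the homogeneous bound $\|Tf\|_{L^{q}_{t}L^{p}_{x}}\lesssim\|f\|_{H}$, the dual bound $\|T^{*}F\|_{H}\lesssim\|F\|_{L^{q'}_{t}L^{p'}_{x}}$, and the boundedness of the bilinear form $TT^{*}$ on the appropriate mixed-norm spaces are all equivalent; hence it suffices to control the (untruncated, then retarded) operator $TT^{*}$. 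First I would record the pointwise-in-time dispersive bound obtained by interpolating the energy estimate $\|U(t)(U(s))^{*}f\|_{L^{2}}\lesssim\|f\|_{L^{2}}$ against the decay estimate $\|U(t)(U(s))^{*}f\|_{L^{\infty}}\lesssim|t-s|^{-\sigma}\|f\|_{L^{1}}$, which by the Riesz-Thorin theorem gives
\[
\|U(t)(U(s))^{*}f\|_{L^{p}_{x}}\lesssim|t-s|^{-\sigma(1-2/p)}\,\|f\|_{L^{p'}_{x}},\qquad 2\le p\le\infty.
\]

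For the non-endpoint range, that is, $\sigma$-admissible triplets with $q,q_{1}>2$, the next step is essentially one-dimensional in time: insert the dispersive bound into the bilinear form and apply the Hardy-Littlewood-Sobolev inequality in the $t$ variable, the spatial factors being absorbed by Minkowski's inequality and the displayed estimate. Since $\sigma$-admissibility reads $1/q=\sigma(1/2-1/p)$, the decay exponent $\sigma(1-2/p)$ equals $2/q$, which is exactly the kernel weight $|t-s|^{-(1/q+1/q_{1})}$ required for fractional integration to map $L^{q_{1}'}_{t}$ into $L^{q}_{t}$; the genuinely off-diagonal case $(q,p)\neq(q_{1},p_{1})$ is then recovered by bilinear real interpolation among such estimates. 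This yields the three claimed inequalities for all admissible pairs with strictly subconjugate time exponents.

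The genuinely hard part is the endpoint $q=2$ (equivalently $q_{1}=2$), where the Hardy-Littlewood-Sobolev step breaks down because the time kernel $|t-s|^{-1}$ sits precisely on the borderline; this is also why the excluded triplet $(2,\infty,1)$, for which $p=\infty$ and $\sigma=1$, must be discarded, as it is a genuine counterexample. To reach the endpoint I would decompose dyadically in the time separation, writing $TT^{*}=\sum_{j\in\mathbb{Z}}T_{j}$ with $T_{j}$ supported on $|t-s|\sim2^{j}$, and estimate each $T_{j}$ as a bilinear form on $L^{2}_{t}L^{p'}_{x}\times L^{2}_{t}L^{p'}_{x}$. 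The naive bound is only $O(1)$ on the diagonal $p'$ exponent and so the sum over $j$ diverges logarithmically. Following Keel-Tao, the remedy is to prove off-diagonal decay of $T_{j}$ between Lorentz spaces $L^{\tilde{p}',2}_{x}$ for $\tilde{p}$ ranging in a neighborhood of $p$, and then to sum the bilinear pieces by combining the exponential gain away from the diagonal with bilinear real interpolation, which converts the borderline logarithmic loss into a convergent series. Controlling this summation, rather than the interpolation or the dispersive input, is where all the difficulty lies, and it is exactly this mechanism that forces the exclusion of the endpoint triplet $(2,\infty,1)$.
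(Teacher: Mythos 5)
This lemma is quoted in the paper verbatim from Keel--Tao with no proof given, so the only benchmark is the original argument, and your outline reproduces it faithfully: the $TT^{*}$ reduction, the Riesz--Thorin dispersive bound, Hardy--Littlewood--Sobolev in time for the non-endpoint range (with the admissibility relation giving exactly the kernel exponent $1/q+1/q_{1}$), and the dyadic decomposition with bilinear real interpolation in Lorentz spaces at the endpoint $q=2$, together with the genuine failure of $(2,\infty,1)$. Your proposal is correct at the level of structure and is essentially the same (and the canonical) approach relied upon by the paper.
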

\vspace{0.1in} {\section{ Proofs of Main Results}}

 {{\it{3.1. Proof of Proposition \ref{a2}}}}. We only need to prove (\ref{eq1a}) for $I=[0. \infty)$ since the
 proofs  for  other cases are similar. Assume that $(q,p,2)$ is a $\frac{n}{2\alpha}-$admissible triplet with
 $q\geq 2$  and  $\left(q, p,\frac{n}{2\alpha}\right)$ is not $(2,\infty, 1).$  It follows from Lemma \ref{2b}
 that we have the energy estimate
\begin{equation}\label{2222}\|e^{-t(-\triangle)^{\alpha}}f\|_{L^{2}_{x}}\lesssim\|f\|_{L^{2}}, \forall  t>0,
\end{equation}
 and untruncated decay estimate
\begin{equation}\label{2223}\|e^{-(t+s)(-\triangle)^{\alpha}}f\|_{L^{\infty}}\lesssim|t+s|^{-\frac{n}{2\alpha}}\|f\|_{L^{1}}
  \lesssim|t-s|^{-\frac{n}{2\alpha}}\|f\|_{L^{1}},\ \forall  s\neq
  t, s, t\in(0, \infty).
\end{equation}
 By (\ref{2222}),  (\ref{2223}) and Lemma \ref{2a}, we can  apply   Lemma \ref{2d} with  $U(t)=e^{-t(-\triangle)^{\alpha}}$
 for $t>0,$ $H=L^{2}(\mathbb{R}^{n})$ and $X=\mathbb{R}^{n}$ to obtain  (\ref{eq1a}).

 \vspace{0.1in}
 {{\it{3.2. Proof of Theorem \ref{a5}}}}.  We only need to prove (\ref{eq1b})  for $I=[0. \infty)$, the proofs  for
 other cases being similar. Assume that  $(q,p,2)$ and $ (q_{1},p_{1},2)$ satisfy  $1\leq p_{1}'< p\leq \infty,$
 $1<q_{1}'<q<\infty$ and  $\frac{1}{q_{1}'}+\frac{n}{2\alpha}\left(\frac{1}{p_{1}'}-\frac{1}{p}\right)=1+\frac{1}{q}.$
  It follows from Lemma \ref{2b} that
$$
\|e^{-(t-s)(-\triangle)^{\alpha}}F(s,x)\|_{L_{x}^{p}}\lesssim|t-s|^{-\frac{n}{2\alpha}\left(\frac{1}{p_{1}'}-\frac{1}{p}\right)}
\|F(s, x)\|_{L_{x}^{p_{1}'}}, \ \ \forall s<t.
$$
 Then  the  Hardy-Littlewood-Sobolev inequality  implies that
\begin{eqnarray*}
 \left\|\int_{0}^{t}e^{-(t-s)(-\triangle)^{\alpha}}F(s, x)ds\right\|_{L^{q}_{t}(I;
 L^{p}_{x})}\!\!\!\!\!&\lesssim&\!\!\!\!
 \left\|\int_{0}^{t}\!\!\|e^{-(t-s)(-\triangle)^{\alpha}}F(s, x)\|_{L^{p}_{x}}ds\right\|_{L^{q}_{t}(I)}\nonumber\\
 &\lesssim&\!\!\!\!\left\|\int_{0}^{t}
 \!\!|t-s|^{-\frac{n}{2\alpha}\left(\frac{1}{p_{1}'}-\frac{1}{p}\right)}\!\|F(s,x)\|_{L^{p_{1}'}_{x}}ds\right\|_{L^{q}_{t}(I)}\nonumber\\
 &\lesssim&\!\!\!\!\|F\|_{L^{q_{1}'}_{t}(I; L^{p_{1}'}_{x})}.\nonumber
\end{eqnarray*}
This finishes the proof of (\ref{eq1b}).

 \vspace{0.1in}
 {{\it{3.3. Proof of Theorem \ref{a4}}}. Let $n=2\alpha.$ Define $\varphi\in C^{\infty}(\mathbb{R})$  with
 $\hbox{supp}(\varphi)\subseteq(1/2, 2),$ $\varphi(x)=1$ for $x\in (3/4, 9/8)$ and $\sum_{k\in \mathbb{Z}}\varphi(2^{-k}t)=1$
 for all $t>0.$ Let $\varphi_{k}(t)=\varphi(2^{-k}t).$ Define $P_{k}f=\mathcal {F}^{-1}(\mathcal {F}f(\cdot)\varphi_{k}(|\cdot|))$
 be a Littlewood-Paley decomposition with respect to $\varphi_{k}$ (see \cite{E. M.  Stein}).  Since $BMO=\dot{F}_{\infty}^{0,2}$
 (see  Frazier-Jawerth-Weiss \cite{F. Frazier Jawerth Weiss}),
 $$
 \|g\|_{BMO}\approx\left\|\left(\sum\limits_{k\in \mathbb{Z}}|P_{k}g|^{2}\right)^{1/2}\right\|_{L^{\infty}}.
 $$
 Let $M_{k}=B(0,2^{k+1})\backslash B(0,2^{k-1})$ and $\chi_{M_{k}}$ its characteristic function.
 Since $\varphi$ is supported in $(1/2, 2)$ and $n=2\alpha,$  we have
\begin{eqnarray*}
 \|e^{-t(-\triangle)^{\alpha}}P_{k}f\|^{2}_{L^{2}_{t}L^{\infty}_{x}}\nonumber
 &\leq&\!\!
 \int_{0}^{\infty}\sup\limits_{x}\left|\int_{\mathbb{R}^{n}}e^{-t|\xi|^{n}}
 e^{i\langle\xi,x\rangle}\hat{f}(\xi)\varphi(2^{-k}|\xi|)d\xi \right|^{2}dt \nonumber\\
 &\leq&\!\!
 \int_{0}^{\infty}\int_{\mathbb{R}^{n}}\chi_{M_{k}}(\xi)d\xi
 \sup\limits_{x}\int_{\mathbb{R}^{n}}\left|e^{-t|\xi|^{n}}
 e^{i\langle\xi,x\rangle}\hat{f}(\xi)\varphi(2^{-k}|\xi|)\right|^{2}d\xi dt \nonumber\\
 &\lesssim&\!\!
 2^{(k-1)n}(2^{2n}-1)\int_{0}^{\infty}\int_{M_{k}}e^{-2t|\xi|^{n}}
 \left|\hat{f}(\xi)\varphi(2^{-k}|\xi|)\right|^{2}d\xi dt \nonumber\\
 &\lesssim&\!\!2^{(k-1)n}(2^{2n}-1)\int_{0}^{\infty}e^{-t2^{(k-1)n+1}}dt\|f\|^{2}_{L^{2}}\nonumber\\
 &\lesssim&\!\!(2^{2n-1}-1/2)\|f\|^{2}_{L^{2}}\nonumber\\
 &\lesssim&\!\!\|f\|^{2}_{L^{2}}.\nonumber
\end{eqnarray*}
 Take $\psi\in C^{\infty}(\mathbb{R})$  with $\hbox{supp}(\psi)\subseteq(1/4,4)$ and $\psi(x)\varphi(x)=\varphi(x).$  Define
 $$
 \widetilde{P}_{k}f=\mathcal {F}^{-1}((\mathcal {F}f)\psi_{k}).
 $$
  Then we have
 \begin{eqnarray*}
  \|e^{-t(-\triangle)^{\alpha}}f\|^{2}_{L^{2}_{t}((0,\infty); BMO_{x})}\nonumber
  &\lesssim&\int_{0}^{\infty}\sup\limits_{x}\left(\sum\limits_{k}|e^{-t(-\triangle)^{\alpha}}P_{k}f|^{2}\right)dt\\
 \nonumber
  &\lesssim& \sum\limits_{k}\|e^{-t(-\triangle)^{\alpha}}P_{k}\widetilde{P}_{k}f\|^{2}_{L^{2}_{t}L_{x}^{\infty}}\nonumber\\
  &\lesssim&
  \sum\limits_{k}\|\widetilde{P}_{k}f\|^{2}_{L^{2}}\nonumber\\
  &\lesssim&\|f\|_{L^{2}}^{2}.\nonumber
  \end{eqnarray*}
 That is,   (\ref{eq1i}) holds.

\vspace{0.1in}
 {{\it{3.4. Proof of Theorem \ref{111111}}}. (a). Let $1\leq r\leq p\leq \infty$ and $n<2\alpha.$ It follows from Lemma \ref{2b} that
$$
 s^{-\frac{nr}{2p\alpha}}\|e^{-s(-\triangle)^{\alpha}}f\|^{r}_{_{L^{p}(\mathbb{R}^{n})}}
 \lesssim s^{-\frac{n}{2\alpha}}\|f\|^{r}_{L^{r}(\mathbb{R}^{n})}.
$$
 On the other hand,  $n<2\alpha$ implies that
$$
 \int_{0}^{T}s^{-\frac{n}{2\alpha}}ds=\frac{2\alpha}{2\alpha-n}T^{1-\frac{n}{2\alpha}}.
$$
 Thus (\ref{222222}) holds.\\
 (b). The following  proof is essentially  the same as  the proof Tao's \cite[Lemma 2.5]{T. Tao 1}. For the sake of completeness, it  is
 provided here. We use the $TT^{*}$ method. Thus, by duality and the  self-adjointness of  $e^{-t(-\triangle)^{\alpha}}$ it suffices to verify
\begin{equation}\label{eq333}
\left\|\int_{0}^{\infty}s^{-1/p}e^{-s(-\triangle)^{\alpha}}F(s,x)ds\right\|^{2}_{L^{2}(\mathbb{R}^{n})}
\lesssim \int_{0}^{\infty}\|F(s,x)\|^{2}_{L^{p'}_{x}(\mathbb{R}^{n})}ds
\end{equation}
 for all test functions $F.$ The left hand side of (\ref{eq333}) can be written as
$$
 \int_{0}^{\infty}\int_{0}^{\infty}s_{1}^{-1/p}s^{-1/p}\left\langle e^{-\frac{s+s_{1}}{2}(-\triangle)^{\alpha}}F(s,x),
 e^{-\frac{s+s_{1}}{2}(-\triangle)^{\alpha}}F(s_{1},x)\right\rangle_{x}dsds_{1}.
$$
 Let $g(s)=\|F(s,x)\|_{L^{p'}_{x}(\mathbb{R}^{n})}.$ According to Lemma \ref{2b}, we have
$$
 \left|\left\langle e^{-\frac{s+s_{1}}{2}(-\triangle)^{\alpha}}F(s,x),
 e^{-\frac{s+s_{1}}{2}(-\triangle)^{\alpha}}F(s_{1},x)\right\rangle_{x}\right|
 \lesssim(s+s_{1})^{-2\left(\frac{1}{p'}-\frac{1}{2}\right)}g(s)g(s_{1}).
$$
  Hence, it suffices to prove that
\begin{equation}\label{eq334}
 \int_{0}^{\infty}\int_{0}^{\infty}\frac{g(s)g(s_{1})dsds_{1}}{(s+s_{1})^{1-2/p}s^{1/p}s_{1}^{1/p}}
 \lesssim \int_{0}^{\infty}g(s)^{2}ds.
\end{equation}
 On the other hand,   by symmetry we can only consider the region $s_{1}\leq s$ which can be decomposed into the dyadic
 ranges $2^{-m}s\leq s_{1}\leq 2^{-m+1}s.$ Hence the left hand side of (\ref{eq334}) can be bounded by
\begin{eqnarray*}
 &\lesssim&\sum\limits^{\infty}_{m=1}2^{m/p}\int_{0}^{\infty}\int_{2^{-m}s\leq s_{1}\leq 2^{-m+1}s}\frac{g(s)g(s_{1})}{s}ds_{1}ds\\
 &\lesssim&\sum\limits^{\infty}_{m=1}2^{m\left(\frac{1}{p}-\frac{1}{2}\right)}\int_{0}^{\infty}g(s)^{2}ds\\
 &\lesssim&\int_{0}^{\infty}g(s)^{2}ds
\end{eqnarray*}
 with  the second inequality using the Schur's test of Tao \cite{T. Tao 2}.

\vspace{0.1in}
  {{\it{3.5. Proof of Theorem \ref{a8}}}}.
  We only need to prove (\ref{eq1s}) for $Z=\dot{H}^{\alpha, p}_{x}.$ Suppose
 $$
 n>2\alpha>0,  p\in[1, 2), q \in (1, 2),\frac{1}{q}+\frac{n}{2\alpha}\left(\frac{1}{p}-\frac{1}{2}\right)=\frac{3}{2}.
 $$
 Thus $\frac{n}{2\alpha}\left(\frac{1}{p}-\frac{1}{2}\right)\in (0,1).$ According to the imbedding of  $\dot{H}^{\alpha, 2}$ into
 $L^{\frac{2n}{n-2\alpha}},$  Lemmas \ref{2a} \& \ref{2b}  and the Hardy-Littlewood-Sobolev inequality, we obtain
\begin{eqnarray*}
 \left\|\!\int_{0}^{t}\!\!e^{-(t-s)(-\triangle)^{\alpha}}\!F(s,x)ds\right\|_{L^{2}_{t}(L^{\frac{2n}{n-2\alpha}}_{x})}
 \!\!\!\!&\lesssim&\!\!\!\!
 \left\|\!\int_{0}^{t}\!\!e^{-(t-s)(-\triangle)^{\alpha}}F(s,x)ds\right\|_{L^{2}_{t}(\dot{H}^{\alpha,
 2}_{x})}
 \nonumber\\
 &\lesssim&\!\!\!\!\left\|(-\triangle)^{\alpha/2}\!\!\int_{0}^{t}\!\!e^{-(t-s)(-\triangle)^{\alpha}}F(s,x)ds\right\|_{L^{2}_{t}(L^{2}_{x})}
 \nonumber\\
 &\lesssim&\!\!\!\!\left\|\int_{0}^{t}\!\!e^{-(t-s)(-\triangle)^{\alpha}}((-\triangle)^{\alpha/2}F(s,x))ds\right\|_{L^{2}_{t}(L^{2}_{x})}
 \nonumber\\
 &\lesssim&\!\!\!\!\left\|\int_{0}^{t}\!\!\|e^{-(t-s)(-\triangle)^{\alpha}}((-\triangle)^{\alpha/2}F(s,x))\|_{L^{2}_{x}}ds\right\|_{L^{2}_{t}}
 \nonumber\\
 &\lesssim&\!\!\!\!\left\|\int_{0}^{t}\!\!|t-s|^{-\frac{n}{2\alpha}\left(\frac{1}{p}-\frac{1}{2}\right)}\|(-\triangle)^{\alpha/2}F(s,x)\|_{L^{p}_{x}}ds\right\|_{L^{2}_{t}}
 \nonumber\\
 &\lesssim&\!\!\!\!\|(-\triangle)^{\alpha/2}F\|_{L^{q}_{t}(L^{p}_{x})}
 \nonumber\\
 &\lesssim&\!\!\!\! \|F\|_{L^{q}_{t}(\dot{H}^{\alpha,
 p}_{x})}.\nonumber
\end{eqnarray*}
 This finishes the proof of (\ref{eq1s}).

 \vspace{0.1in}
 {{\it{3.6. Proof of Corollary \ref{a7}}}. We only check  (\ref{eq1k})  with $(X_{1},X_{2})=(\dot{B}^{s}_{p, 2}, \dot{B}^{s}_{2, 2})$
  and (\ref{eq1m}) with $(Y_{1}, Y_{2})=(\dot{B}^{s}_{p,2}, \dot{B}^{s}_{ p_{1}', 2})$  because the proofs of other cases are
  similar. We assume that $p<\infty$  since  the case $p=\infty$ is  similar. We will use the following equivalent norms in Besov spaces.
  Let $\eta$ be an infinitely differential function with compact support in $\mathbb{R}^{n}$ satisfying
\begin{equation}\label{eq1e}
 \eta(\xi)=\left \{\begin{array}{ll}
 1, & |\xi|\leq 1, \\
 0, & |\xi|\geq 2, \end{array} \right.
\end{equation}
 define the sequence $\{\psi_{j}\}_{j\in\mathbb{Z}}$ in  $\mathscr {S}(\mathbb{R}^{n})$ by
\begin{equation}\label{eq1f}
\psi_{j}(\xi)=\eta\left(\frac{\xi}{2^{j}}\right)-\eta\left(\frac{\xi}{2^{j-1}}\right).
\end{equation}
 Through this sequence,   the norms in the inhomogeneous and homogeneous Besov spaces $B^{s}_{p,q}(\mathbb{R}^{n})$ and
  $\dot{B}^{s}_{p,q}(\mathbb{R}^{n})$  for $1\leq p, q\leq \infty$ and $s\in \mathbb{R}^{n}$ are equivalent to
 \begin{equation}\label{eq1g}
 \|f\|_{B^{s}_{p,q}}=\|\mathcal {F}^{-1}(\eta\mathcal
 {F}(f))\|_{L^{p}(\mathbb{R}^{n})}+ \left \{\begin{array}{ll}
 \left(\sum\limits_{j=1}^{\infty}(2^{sj}\|\mathcal
 {F}^{-1}(\psi_{j}\mathcal
 {F}(f))\|_{L^{p}(\mathbb{R}^{n})})^{q}\right)^{1/q}
 & \hbox{if} \  q< \infty, \\
 \sup\limits_{j\geq 1}2^{sj}\|\mathcal {F}^{-1}(\psi_{j}\mathcal
 {F}(f))\|_{L^{p}(\mathbb{R}^{n})} & \hbox{if}\ q=\infty \end{array}
\right.
\end{equation}
 and
 \begin{equation}\label{eq1h}
 \|f\|_{\dot{B}^{s}_{p,q}}= \left \{\begin{array}{ll}
 \left(\sum\limits_{j=-\infty}^{\infty}(2^{sj}\|\mathcal
 {F}^{-1}(\psi_{j}\mathcal
 {F}(f))\|_{L^{p}(\mathbb{R}^{n})})^{q}\right)^{1/q}
 & \hbox{if}\  q< \infty, \\
 \sup\limits_{j\in \mathbb{Z}}2^{sj}\|\mathcal
 {F}^{-1}(\psi_{j}\mathcal {F}(f))\|_{L^{p}(\mathbb{R}^{n})} &
 \hbox{if}\ q=\infty.
 \end{array}
  \right.
\end{equation}

 {{\it{Part 1. Proof of (\ref{eq1k})}}}.  We assume that $q<\infty,$ note that the case $q=\infty$ is obvious. Define
 $u(t)=e^{-t(-\triangle)^{\alpha}}f.$ Then
 $$
 \mathcal {F}^{-1}(\psi_{j}\mathcal {F}(u))=\mathcal {F}^{-1}(e^{-t|\xi|^{2\alpha}}\psi_{j}\mathcal {F}(f))
 =e^{-t(-\triangle)^{\alpha}}(\mathcal {F}^{-1}(\psi_{j}\mathcal {F}(f))).
 $$
  Hence
 $$
  \|u\|_{L^{q}_{t}(I;\dot{B}^{s}_{p,2})}^{2}=\left(\int\limits_{I}\left(\sum\limits_{j}2^{2sj}
  \|e^{-t(-\triangle)^{\alpha}}(\mathcal {F}^{-1}(\psi_{j}\mathcal {F}(f)))\|_{L^{p}}^{2}\right)^{q/2}dt\right)^{2/q}.
 $$
  Letting
  $
  A_{j}(t)=2^{2sj}\|e^{-t(-\triangle)^{\alpha}}(\mathcal {F}^{-1}(\psi_{j}\mathcal {F}(f)))\|_{L^{p}}^{2}
  $
  and $k=q/2\geq 1,$ we have
 \begin{eqnarray*}
  \|u\|_{L^{q}_{t}(I; \dot{B}^{s}_{p,2})}^{2}&=&\left(\int_{I}\left(\sum\limits_{j}A_{j}(t)\right)^{k}dt\right)^{1/k}\nonumber\\
  &=&\|\sum\limits_{j}A_{j}(\cdot)\|_{L^{k}(I)}\nonumber\\
  &\leq& \sum\limits_{j}\|A_{j}(\cdot)\|_{L^{k}(I)}\nonumber\\
  &=& \sum_{j}2^{2sj}
  \|e^{-t(-\triangle)^{\alpha}}(\mathcal {F}^{-1}(\psi_{j}\mathcal {F}(f)))\|^{2}_{L^{q}(I;L^{p})}.\nonumber
  \end{eqnarray*}
  Using Proposition \ref{a2}, we deduce
$$
 \|u\|_{L^{q}_{t}(I;\dot{B}^{s}_{p,2} )}\lesssim \left(\sum\limits_{j}2^{2sj}
 \|\mathcal {F}^{-1}(\psi_{j}\mathcal {F}(f))\|^{2}_{L^{2}}\right)^{1/2}\lesssim \|f\|_{\dot{B}^{s}_{2,2}}.
$$
 Therefore, (\ref{eq1k}) holds.\\
 {{{\it{Part 2. Proof of (\ref{eq1m})}}}}. Let $u(t)=\int_{0}^{t}e^{-(t-s)(-\triangle)^{\alpha}}F(s,x)ds.$ Then
\begin{eqnarray*}
  2^{sj}\mathcal {F}^{-1}(\psi_{j}\mathcal {F}(u))&=&2^{sj}\mathcal {F}^{-1}\int_{0}^{t}
  \psi_{j}\mathcal {F}(e^{-(t-s)(-\triangle)^{\alpha}}F(s,x))ds\nonumber\\
  &=&2^{sj}\mathcal {F}^{-1}\int_{0}^{t}
  e^{-(t-s)|\xi|^{2\alpha}}\psi_{j}\mathcal {F}(F(s,\xi))ds\nonumber\\
  &=&2^{sj}\int_{0}^{t}
  \mathcal {F}^{-1}\left(e^{-(t-s)|\xi|^{2\alpha}}\psi_{j}\mathcal {F}(F(s,\xi))\right)ds\nonumber\\
  &=&\int_{0}^{t}
  e^{-(t-s)(-\triangle)^{\alpha}}\left(2^{sj}\mathcal {F}^{-1}\left(\psi_{j}\mathcal {F}(F(s,\xi))\right)\right)ds\nonumber\\
  &=&\int_{0}^{t}
  e^{-(t-s)(-\triangle)^{\alpha}}v_{j}(t)ds,\nonumber
\end{eqnarray*}
where
 $v_{j}(t)=2^{sj}\mathcal {F}^{-1}\left(\psi_{j}\mathcal {F}(F(s,\xi))\right).$
 Thus
$$
\|u\|_{L^{q}_{t}(I;\dot{B}^{s}_{p,2})}^{2}\lesssim\left(\int_{I}\left(\sum\limits_{j}\left\|\int_{0}^{t}
e^{-(t-s)(-\triangle)^{\alpha}}v_{j}(t)ds\right\|_{L^{p}}^{2}\right)^{q/2}dt\right)^{2/q}.
$$
 In a similar manner to verify  (\ref{eq1k}), we have
$$
 \|u\|_{L^{q}_{t}(I;\dot{B}^{s}_{p,2})}^{2}\lesssim\sum\limits_{j}\left\|\int_{0}^{t}
 e^{-(t-s)(-\triangle)^{\alpha}}v_{j}(t)ds\right\|_{L^q_{t}(I; L^{p})}^{2}.
$$
  Applying  Theorem \ref{a5}, we get
$$
 \|u\|_{L^{q}_{t}(I;\dot{B}^{s}_{p,2})}^{2}\lesssim\sum\limits_{j}
 \|v_{j}\|^{2}_{L^{q_{1}'}_{t}(I; L^{p_{1}'})}\lesssim\sum\limits_{j}\left(\int_{I}R_{j}(t)dt\right)^{k},
$$
  where $R_{j}(t)=\|v_{j}(t)\|^{q_{1}'}_{L^{p_{1}'}}$ and $k=2/q_{1}'\geq 1.$ An application of the Minkowski inequality yields
\begin{eqnarray*}
 \|u\|_{L^{q}_{t}(I;\dot{B}^{s}_{p,2})}^{2/k}&\lesssim&\left\|\int_{I}R_{j}(t)dt\right\|_{{l}^{k}(\mathbb{Z})}\nonumber\\
 &\lesssim&\int_{I}\left\|R_{j}(t)\right\|_{{l}^{k}(\mathbb{Z})}dt\nonumber\\
 &\lesssim&\int_{I}\left(\sum\limits_{j}\|v_{j}(t)\|^{2}_{L^{p_{1}'}}\right)^{q_{1}'/2}dt\nonumber\\
 &\lesssim&\|F\|_{L^{q_{1}'}_{t}(I;\dot{B}^{s}_{p_{1}',2})}^{q_{1}'}.\nonumber
\end{eqnarray*}
 Thus (\ref{eq1m}) holds.

 \vspace{0.1in}
 {{\it{3.7. Proof of Corollary \ref{88}}}}. We shall prove  this theorem for $n>2\alpha.$  In the case $n=2\alpha,$
 we can replace in the sequel the space $L^{2}_{t}(J; L^{\frac{2n}{n-2\alpha}}_{x})$ by any $L^{q}_{t}(J;L^{p}_{x})$
 for $1-$admissible $(q,p,2)$  with  $p$ arbitrarily large.\\
 We consider the following two cases.

 {{\em{Case 1, $r\in(2, \infty)$}}}: Let $(q,p,2)$  ($2\leq q<\infty$) be  $\frac{n}{2\alpha}-$admissible.  Let
 $J=[0, \varepsilon]$ where $\varepsilon>0$  will be determined later and $(k,l,2)$ be  $\frac{n}{2\alpha}-$admissible
 with $q\leq k<\infty$,  and set
$$
 X=L^{k}_{t}(J;L^{l}_{x})\cap L^{2}_{t}(J;L^{\frac{2n}{n-2\alpha}}_{x})\ \hbox{with}\ \|v\|_{X}:=\max\left\{\|v\|_{L^{k}_{t}(J;L^{l}_{x})},
\|v\|_{L^{2}_{t}(J;L^{\frac{2n}{n-2\alpha}}_{x})}\right\}.
$$
 By interpolation (see Triebel \cite{H. Triebel}), $X$  can be  embedded into $L^{q_{0}}_{t}(J;L^{p_{0}}_{x})$ for  each
 $\frac{n}{2\alpha}-$admissible triplet $(q_{0},p_{0},2)$ with  $2\leq q_{0}\leq k.$  Define $T(v)$ on $X$ by
$$
T(v)=e^{-t(-\triangle)^{\alpha}}f+\int_{0}^{t}e^{-(t-s)(-\triangle)^{\alpha}}(F(s,x)-V(s,x)v(s,x))ds, \forall v=v(t,x)\in X.
$$
 Applying Proposition \ref{a2} and Theorem \ref{a5}, we have
$$
\|T(v)\|_{L^{{q_{0}}}_{t}(J; L^{{p_{0}}}_{x})}\leq C
\|f\|_{2}+ C\|F\|_{L^{{q_{1}'}}_{t}(J;L^{p_{1}'}_{x})}
+C\|Vv\|_{L^{q_{2}'}_{t}(J;L^{p_{2}'}_{x})},
$$
 for all $\frac{n}{2\alpha}-$admissible triplets $(q_{0},p_{0},2), (q_{1},p_{1},2),$ and $ (q_{2},p_{2},2)$ satisfying
 $$
 2\leq q_{0}\leq k,\ \ q_{1}'\in (1, 2),\ \ q_{2}'\in (1, 2),\
\  1\leq p_{1}'<p_{0}\leq \infty,\ \ 1\leq p_{2}'<p_{0}\leq \infty.
  $$
 Here and later $C>0$ is a constant.  Clearly,  H$\ddot{o}$lder's inequality implies
$$
\|T(v)\|_{L^{{q_{0}}}_{t}(J; L^{{p_{0}}}_{x})}\leq C \|f\|_{2}+
C\|F\|_{L^{{q_{1}'}}_{t}(J;L^{p_{1}'}_{x})} +C\|V\|_{L^{r}_{t}(J;
L^{s}_{x})}\|v\|_{L^{2}_{t}(J;L^{\frac{2n}{n-2\alpha}}_{x})}
$$
provided
$$
 \frac{1}{q_{2}}=\frac{1}{2}-\frac{1}{r}, \frac{1}{p_{2}}=\frac{n+2\alpha}{2n}-\frac{1}{s}.
$$
 This and the  assumption on $r$ and $s$ imply that $q_{2}'\in (1,2) ,$ $p_{2}'\in[1,2)$ and
$$
\frac{1}{q_{2}}+\frac{n}{2\alpha}\frac{1}{p_{2}}=\frac{1}{2}
+\frac{n}{2\alpha}\frac{n+2\alpha}{2n}-\left(\frac{n}{2\alpha}\frac{1}{s}+\frac{1}{r}\right)=
\frac{n}{4\alpha}.
$$
 Taking $(q_{0},p_{0}, 2)$ be  $(k, l,2)$ and $(2,\frac{2n}{n-2\alpha}, 2),$ we get
$$
 \|T(v)\|_{X}\leq C
 \|f\|_{2}+ C\|F\|_{L^{{q_{1}'}}_{t}(J;L^{p_{1}'}_{x})}
 +C\|V\|_{L^{r}_{t}(J; L^{s}_{x})}\|v\|_{X}.
$$
 Hence   $T(v)\in X$ and $T$ is a operator from $X$ to $X.$ Since $r<\infty,$ we may  choose  such an  $\varepsilon>0$  that
 \begin{equation}\label{3a}
  C\|V\|_{L^{r}_{t}(J; L^{s}_{x})}\leq\frac{1}{2}.
 \end{equation}
  This fact yields that
$$
 \|T(v_{1})-T(v_{2})\|_{X}\leq \frac{1}{2}\|v_{1}-v_{2}\|_{X}, \ \ \forall v_{1}, v_{2}\in X.
$$
 Thus $T$ is a contraction operator on $X,$ and  $T$ has a unique fixed point $v(t,x)$ which is the unique solution
 of equation (\ref{eq1111d}) and $v$ satisfies
 $$
 \|v\|_{X}\lesssim \|f\|_{2}+ \|F\|_{L^{{q_{1}'}}_{t}(J;L^{p_{1}'}_{x})}.
$$
 Since $X$ is embedded  in $L^{q}_{t}(J;L^{p}_{x}),$ one finds
 \begin{equation*}\label{3b}
 \|v\|_{L^{{q}}_{t}(J; L^{{p}}_{x})}\lesssim
 \|f\|_{2}+ \|F\|_{L^{{q_{1}'}}_{t}(J;L^{p_{1}'}_{x})}.
 \end{equation*}

 Now, we can apply the previous arguments to any subinterval $J=[t_{1},t_{2}]$ on which   a condition like (\ref{3a}) holds and
 obtain
 \begin{equation}\label{3b}
  \|v\|_{L^{q}_{t}(J;L^{p}_{x})}\lesssim\|v(t_{1})\|_{L^{2}}+\|F\|_{L^{{q_{1}'}}_{t}(J;L^{p_{1}'}_{x})}.
 \end{equation}
 Note that (\ref{3b}) implies
\begin{equation}\label{3c}
 \|v(t_{2})\|_{L^{2}}\lesssim\|v(t_{1})\|_{L^{2}}+\|F\|_{L^{{q_{1}'}}_{t}(J;L^{p_{1}'}_{x})}.
\end{equation}
 If $I=[0,T]$ for $0<T<\infty,$  we can partition $I$ into  a finite many of subintervals on which the condition (\ref{3a}) holds.
 If $I=[0,\infty),$ since $V\in L^{r}_{t}(I;L^{s}_{x}(\mathbb{R}^{n}))$ we can find $T_{1}>0$ such that
 $C\|V\|_{L^{r}_{t}((T_{1},\infty);L^{s}_{x}(\mathbb{R}^{n}))}<\frac{1}{2}$ and partition $[0,T_{1}]$ similarly.
 Thus we can prove (\ref{1111c}) by inductively  applying  (\ref{3b}) and  (\ref{3c}).

 {{\em{Case 2, $r\in (1,2)$}}}.
  Since $(r, \frac{2s}{s+2})$ is the dual of $(r', \frac{2s}{s-2}),$ our assumption on $r,s$ implies
$$
 \frac{1}{r'}+\frac{n}{2\alpha}\frac{s-2}{2s}=\frac{n}{2\alpha s}+\frac{n}{2\alpha}\frac{s-2}{2s}=\frac{n}{4\alpha}.
$$
 Thus $(r', \frac{2s}{s-2})$ is  $\frac{n}{2\alpha}-$admissible with $r\in (1,2).$ In a fashion analogous  to handling  {\em{Case 1}},
 we use  Theorems \ref{a2} \& \ref{a5}, to obtain
 $$
 \|T(v)\|_{L^{{q_{0}}}_{x}(J; L^{{p_{0}}}_{x})}\leq C
 \|f\|_{2}+ C\|F\|_{L^{{q_{1}'}}_{t}(J;L^{p_{1}'}_{x})}
 +C\|Vv\|_{L^{r}_{t}(J;L^{\frac{2s}{s+2}}_{x})}.
 $$
  Again, by  H$\ddot{o}$lder's inequality we have
 $$
  \|T(v)\|_{L^{{q_{0}}}_{t}(J; L^{{p_{0}}}_{x})}\leq C
  \|f\|_{2}+ C\|F\|_{L^{{q_{1}'}}_{t}(J;L^{p_{1}'}_{x})}
  +C\|V\|_{L^{r}_{t}(J;L^{s}_{x})}\|v\|_{L^{\infty}_{t}(J;L^{2}_{x})}.
 $$
 Similarly, taking $(q_{0},p_{0}, 2)$ be $(k, l,2)$ and $(2,\frac{2n}{n-2\alpha}, 2)$, we have
$$
 \|T(v)\|_{X}\leq C
 \|f\|_{2}+ C\|F\|_{L^{{q_{1}'}}_{t}(J;L^{p_{1}'}_{x})}
 +C\|V\|_{L^{r}_{t}(J; L^{s}_{x})}\|v\|_{X}.
$$
 The rest of the  proof is similar to that of the  first case.\\

 \vspace{0.1in} {{\it{3.8. Proof of Theorem \ref{a9}}}}.  We only prove the case $(X,Y)=(L^{p}_{x},L^{p'_{1}}_{x})$ since similar
 arguments apply to other cases. Assume that $T\in (0, \infty),$ $1\leq p_{1}'<p\leq\infty, $ $1\leq q_{1}'<q\leq\infty, $
 $\frac{1}{r}=\frac{1}{p}+\frac{1}{p_{1}},$ $\frac{1}{h}=\frac{1}{q}+\frac{1}{q_{1}}$ and $\frac{nh}{2\alpha}\left(1-\frac{1}{r}\right)\in (0, 1).$
  Let $I=[0,T).$ According to   the Young's inequality and the definition of $e^{-t(-\triangle)^{\alpha}}$, we have
\begin{eqnarray*}
\left\|\int_{0}^{t}e^{-(t-s)(-\triangle)^{\alpha}}F(s,x)ds\right\|_{L^{q}_{t}(I;
L^{p}_{x})}\nonumber
\!\!&\lesssim&\!\left\|\int_{0}^{t}\!\|e^{-(t-s)(-\triangle)^{\alpha}}F(s,x)\|_{L^{p}_{x}}ds\right\|_{L^{q}_{t}(I)}\\\nonumber
&\lesssim&\left\|\int_{0}^{t}\!\|K_{t-s}^{\alpha}(x)\ast_{x}
F(s,x)\|_{L^{p}_{x}}ds\right\|_{L^{q}_{t}(I)}\\\nonumber
&\lesssim&\left\|\int_{0}^{t}\!\|K_{t-s}^{\alpha}(x)\|_{L^{r}_{x}}
\|F(s,x)\|_{L^{p_{1}'}_{x}}ds\right\|_{L^{q}_{t}(I)}\\
\nonumber &\lesssim&\|K_{t}^{\alpha}(x)\|_{L^{h}_{t}(I; L^{r}_{x})}
\|F(s,x)\|_{L^{q_{1}'}_{t}(I;L^{p_{1}'}_{x})}.\nonumber
\end{eqnarray*}
 Thus it suffices to prove $ \|K_{t}^{\alpha}(x)\|_{L^{h}_{t}(I; L^{r}_{x})}\lesssim T^{\frac{1}{h}-\frac{n}{2\alpha}(1-\frac{1}{r})}. $
 In fact, it follows from Miao-Yuan-Zhang's \cite[Lemma 2.1]{C. Miao} that $K_{1}^{\alpha}(x)\in L^{k}$ for all $1\leq k \leq \infty.$
 Since $\frac{1}{r}=\frac{1}{p}+\frac{1}{p_{1}}$ and $p_{1}'<p$ imply that  $r>1,$  $K_{1}^{\alpha}(x)\in L^{r}.$  Hence
  \begin{eqnarray*}
 \|K_{t}^{\alpha}(x)\|_{L^{h}_{t}(I; L^{r}_{x})}\nonumber
 &=&\left(\int_{0}^{T}\left(\int_{\mathbb{R}^{n}}\left(
 \int_{\mathbb{R}^{n}}e^{ix\cdot
 \xi}e^{-t|\xi|^{2\alpha}}d\xi\right)^{r}
 dx\right)^{\frac{h}{r}}dt\right)^{1/h}\\\nonumber
 &=&\left(\int_{0}^{T}t^{-\frac{nh}{2\alpha}(1-\frac{1}{r})}dt\right)^{1/h}\|K_{1}^{\alpha}\|_{L^{r}}\nonumber\\
 &\lesssim& T^{\frac{1}{h}-\frac{n}{2\alpha}(1-\frac{1}{r})}.
  \end{eqnarray*}
 This finishes the proof of Theorem  \ref{a9}.

 \vspace{0.1in}
  {{\it{3.9 Proof of Proposition  \ref{propostion}.}}  By Lemma \ref{2b} and $L^{p}-$boundness  of Riesz transform, we have
\begin{eqnarray*}
 \|B(u,v)\|_{L^{p}}&\lesssim&\int_{0}^{t}\|\nabla
 e^{-(t-s)(-\triangle)^{\beta}}P(u(s,\cdot)\otimes v(s,\cdot))\|_{L^{p}}ds\\
 &\lesssim&\int_{0}^{t}\frac{1}{|t-s|^{\frac{1}{2\alpha}+\frac{n}{2\alpha}\left(\frac{2}{p}-\frac{1}{p}\right)}}
 \|(u(s,\cdot)\otimes v(s,\cdot))\|_{L^{p/2}}ds\\
 &\lesssim&\int_{0}^{t}\frac{1}{|t-s|^{\frac{1}{2\alpha}+\frac{n}{2p\alpha}}}
 \|u(s,\cdot)\|_{L^{p}}\|v(s,\cdot)\|_{L^{p}}ds.
\end{eqnarray*}
 Since $\alpha>\frac{1}{2}$ and $p>\frac{n}{2\alpha-1},$
$$
 0<\frac{1}{2\alpha}+\frac{n}{2p\alpha}<1.
$$
 It follows from  $2\alpha-1=\frac{2\alpha}{q}+\frac{n}{p}$ and the Hardy-Littlewood-Sobolev inequality that
\begin{eqnarray*}
 \|B(u,v)\|_{L^{q}([0,T];L^{p})}&\lesssim&
 \left\|(\|u(s,\cdot)\|_{L^{p}}\|v(s,\cdot)\|_{L^{p}})\right\|_{L^{q/2}([0,T])}\\
 &\lesssim&
 \|u\|_{L^{q}([0,T];L^{p})}\|v\|_{L^{q}([0,T];L^{p})}.\\
\end{eqnarray*}

 \vspace{0.1in}
 {{\it{3.10 Proof of Proposition  \ref{10}.}}  (a)  Under the assumption of (a), let
 $X=L^{q}([0,T]; L^{p}(\mathbb{R}^{n})).$   Define
  \begin{equation}\label{operator}
  Tv=e^{-t(-\triangle)^{\alpha}}g+ \int_{0}^{t}e^{-(t-s)(-\triangle)^{\alpha}}P(h-\nabla\cdot(v\otimes v)(s,x)ds.
\end{equation}
We will prove that if
 $$
  a:= T^{1-\frac{n}{2\alpha}\left(\frac{1}{n}+\frac{1}{r}\right)}\|g\|_{L^{r}(\mathbb{R}^{n})}+
 T^{\frac{1}{q}+\frac{1}{q_{1}}-\frac{n}{2\alpha}\left(\frac{1}{p_{1}'}-\frac{1}{p}\right)} \|h\|_{L^{q_{1}'}([0,T]; L^{p_{1}'}(\mathbb{R}^{n}))}
$$
 is bounded by an appropriate constant,  then $T$ is a contraction operator on the ball $B_{R}$ in $X$ with radius $R=2a.$
 For any  $v^{1},$ $v^{2}\in B_{R},$ we have

 \begin{eqnarray*}\label{371}
 \|T(v_{1})-T(v_{2})\|_{X}&=&\left\|\int_{0}^{t}\!e^{-(t-s)(-\triangle)^{\alpha}}P\nabla\cdot(v_{1}\otimes
  v_{1})ds-
  \!\int_{0}^{t}\!e^{-(t-s)(-\triangle)^{\alpha}}P\nabla\cdot(v_{2}\otimes
  v_{2})ds\right\|_{X}\\
  &=&\|B(v_{1}-v_{2},v_{1})-B(v_{2},v_{1}-v_{2})\|_{X}\\
  &\leq& \|B(v_{1}-v_{2},v_{1})\|_{X} +\|B(v_{2},v_{1}-v_{2})\|_{X},
 \end{eqnarray*}
 where
$$
 B(u,v)=\int_{0}^{t}(e^{-(t-s)(-\triangle)^{\alpha}})P\nabla\cdot(u\otimes v)(s)ds.
$$
 It follows from  Proposition \ref{propostion} that $B$ is bounded on $X.$ Thus
 $$
 \|T(v_{1})-T(v_{2})\|_{X}\leq
 C\|v_{1}-v_{2}\|_{X}\|v_{1}\|_{X}+C\|v_{2}\|_{X}\|v_{1}-v_{2}\|_{X},
 $$
 where $C>0$ is only  dependent on $\alpha, p$ and $q.$  Thus
 $$
 \|T(v_{1})-T(v_{2})\|_{X}\leq C(\|v_{1}\|_{X}+\|v_{2}\|_{X})\|v_{1}-v_{2}\|_{X}\leq CR\|v_{1}-v_{2}\|_{X}.
 $$
 To estimate $\|Tv\|_{X}$ for $v\in B_{R},$  we  use
 $$
 T(0)=e^{-t(-\triangle)^{\alpha}}g+\int_{0}^{t}e^{-(t-s)(-\triangle)^{\alpha}}Ph(s,x)ds
 $$
  to obtain $\|T(0)\|_{X}\leq Ca$  according to Theorem \ref{a9} and   Lemma \ref{2 lemma 3}. Consequently,
 $$
 \|T(v)\|_{X}=\|T(v)-T(0)+T(0)\|_{X}\leq \|T(v-0)\|_{X}+\|T(0)\|_{X}\leq CR\|v\|_{X}+Ca.
 $$
Since $a$ is  bounded by a suitable constant,  then we have
 $$
 \|T(v^{1})-T(v^{2})\|_{X}\leq\frac{1}{2}\|v^{1}-v^{2}\|_{X}\  \hbox{and}\ \|T(v)\|_{X}\leq R.
 $$
 It follows from the Banach contraction mapping principle that there exists a unique  $v\in X=L^{q}_{t}([0,T]; L^{p}_{x}(\mathbb{R}^{n})).$ \\
 (b) Note that $\frac{n}{p} +\frac{2\alpha}{q}=2\alpha-1$ implies that $(q,p,\frac{n}{2\alpha-1})$ is $\frac{n}{2\alpha}-$admissible.
 By Lemma \ref{2c}, we get
 $$
  \|e^{-t(-\triangle)^{\alpha}}g\|_{L^{q}_{t}([0,\infty); L^{p}_{x})}\lesssim \|g\|_{L^{\frac{n}{2\alpha-1}}}.
 $$
  On the other hand, Theorem \ref{a5} implies
$$
 \left\|\int_{0}^{t}e^{-(t-s)(-\triangle)^{\alpha}}h(s,x) ds\right\|_{L^{q}_{t}([0,\infty); L^{p}_{x})}
 \lesssim\|h\|_{L^{q_{1}'}_{t}([0,\infty); L^{p_{1}'}_{x})}.
$$
 Applying Proposition \ref{propostion} for $T=\infty$ and the Banach contraction mapping principle, we can prove (b) since
 $\|g\|_{L^{\frac{n}{2\alpha-1}}}+\|h\|_{L^{q_{1}'}_{t}([0,\infty); L^{p_{1}'}_{x})}$ is small enough.

 \vspace{0.1in}
 {{\it{3.11 Proof of Corollary  \ref{proposition}.}} The proof is similar to that of Proposition  \ref{10}.  We only demonstrate the
 case $|j|=1,$ since  similar arguments apply to the cases $|j|=2,3,\cdots, |k|.$ Define
\begin{equation}\label{operator2}
\overline{{T}}(Dv)=e^{-t(-\triangle)^{\alpha}}(Dg)+\int_{0}^{t}e^{-(t-s)(-\triangle)^{\alpha}}P(Dh)-B(Dv,v)-B(v,Dv).
\end{equation}
 Consider the integral equation $Dv=\overline{T}(Dv).$ Then $\overline{T}$ is a mapping of the space $X$ of function $v$ with
$$
v\in L^{q}([0,T];L^{p})\ \hbox{and}\ Dv\in L^{q}([0,T]; L^{p}).
$$
 The norm in $X$ is defined by
$$
  \|v\|_{X}=\|v\|_{L^{q}([0,T];L^{p})}+\|Dv\|_{L^{q}([0,T];L^{p})}.
 $$
 The assumption on $Dg$ and $Dh$ implies that the first two terms in the right hand side of (\ref{operator2}) are bounded in $X.$ The
 boundness of the other terms follows from Proposition\ref{propostion}. So,   $\overline{T}$ is a contraction mapping of $X$ into
 itself and  has a unique fixed point in $X.$ Therefore, the solution $v$ established in Proposition  \ref{10}
 satisfies $Dv\in L^{q}([0,T];L^{p}).$

 \vspace{0.1in}
\noindent
{\bf{Acknowledgements.}}

  This work is a part of my doctoral thesis, I want to thank my supervisor Professor  Jie Xiao for suggesting the
 problem  and for all  helpful discussions and  kind encouragement.

 \vspace{0.1in}

\bibliographystyle{amsplain}

\end{document}